\theoremstyle{plain} \newtheorem{definisjon}{Definition}[section]
\theoremstyle{plain} \newtheorem{teorem}[definisjon]{Theorem}
\theoremstyle{plain} 
\theoremstyle{plain} \newtheorem{korollar}[definisjon]{Corollary}
\theoremstyle{plain} \newtheorem{proposisjon}[definisjon]{Proposition}
\theoremstyle{definition} \newtheorem{remark}[definisjon]{Remark}
\theoremstyle{definition} \newtheorem{antagelse}[definisjon]{Assumption}
\numberwithin{equation}{section}
\newcommand{\ie}{i.e.~}
\newcommand{\E}{\mathbb{E}}
\newcommand{\Prob}{\mathbb{P}}
\newcommand{\ind}{\mathbf{1}}
\newcommand{\RR}{{\mathbb{R}}}
\newcommand{\Zz}{{\mathcal{Z}}}
\newcommand{\Ff}{\mathcal{F}}
\newcommand{\Gg}{\mathcal{G}}
\newcommand{\Dd}{\mathscr{D}}
\newcommand{\FF}{\mathbb{F}}
\newcommand{\GG}{\mathbb{G}}
\newcommand{\Pp}{\mathcal{P}}
\newcommand{\Bb}{\mathcal{B}}
\newcommand{\Ham}{\mathcal{H}}
\newcommand{\II}{\mathcal{I}}
\newcommand{\UU}{\mathcal{U}}
\newcommand{\Md}{D}
\newcommand{\Ht}{\tilde{H}}
\newcommand{\ins}{\,}
\newcommand{\minus}{}
\DeclareMathOperator*{\esssup}{ess\,sup}
\DeclareMathOperator{\Af}{\mathcal{A}^\Ff}
\renewcommand{\theenumi}{\roman{enumi})}
\begin{document}
\title[]{Maximum principles for non-Markovian semi-martingales with jumps and more}
\date{\today }
\author[Sjursen]{Steffen Sjursen}
\address{Steffen Sjursen: Department of mathematics, University of Oslo, PO Box 1053 Blindern, N-0316 Oslo, Norway}

\email[]{steffen.sjursen@cma.uio.no}

\subjclass[2010]{60H07, 93E20}
\keywords{Maximum principle, martingale random fields, non-anticipating stochastic derivative, credit risk, optimal control}

\begin{abstract}
We find a maximum principle for general non-Markovian semi-martingales. 
We do so by describing the adjoint processes with non-anticipating stochastic derivatives in a martingale random field setting. 
In the case of the L{\'e}vy processes this extends maximum principles with Malliavin derivatives, in the sense that we replace Malliavin differentiability conditions with weaker and simpler $L_2$-conditions.

As an application we use the maximum principle to solve a portfolio optimization problem for assets with credit risk modeled by doubly stochastic Poisson processes.
\end{abstract}

\maketitle

\section{Introduction}

There are two main approaches to optimization problems, dynamic programming with HJB-type equations or using BSDEs (backward stochastic differential equations). However, for dynamic programming the state equation must be Markovian, while any BSDE-approach requires the existence of the actual BSDE. Here we find a stochastic maximum principle that avoids both of these requirements. 

We consider the performance functional 
\begin{equation}
J(u) = \E \Big[ \int\limits_0^T f_t(u_t,X_t) \ins dt + g(X_T) \Big]
\label{eq:performance_functional_maximum_intro}
\end{equation}
and the associated optimal stochastic control problem, where $u$ is the control and the state process is given by the semi-martingale $X$,
\begin{equation}
X_t^{(u)} = X_0 + \int\limits_0^t b_s(u_s,X_s) \ins ds + \int\limits_0^t \int\limits_\Zz \phi_s(z,u_s,X_{s\minus}) \ins \mu(ds,dz), \quad t\in [0,T],
\label{eq:definitionX}
\end{equation}
where the last integral is with respect to the martingale random field, \cite{Cairoli1975,DiNunno2010}, $\mu$ on $[0,T]\times \Zz$. The choice of \emph{martingale random fields} is made to fit the most general description of the \emph{non-anticipating stochastic derivative} made in \cite{DiNunno2010}. But we must emphasize that \textbf{any semi-martingale whose jumps are totally inaccessible stopping times can be described via equation \eqref{eq:definitionX}}. With martingale random fields we can also consider some infinite dimensional cases, see \cite{DiNunno2010} for examples.

The goal is to find $\sup_u J(u)$ for controls adapted to the filtration $\FF$, where $X$ is adapted to the filtration $\GG$ and $\FF\subseteq \GG$, \ie for all $t\in [0,T]$ we have $\Ff_t \subseteq \Gg_t$. This is a problem with \emph{partial information} if $X$ is not $\FF$-adapted. 
We find (candidates for) optimal solutions by investigating
\begin{equation}
\frac{\partial}{\partial y} J(u+y\beta) \big|_{y=0}, \quad u,\; u+y\beta \text{ are admissible controls and } |y| <\delta,
\label{eq:intro_derivative}
\end{equation}
for some $\delta>0$. 
The controls are taking values in an open, convex set $\UU \subseteq \RR^n$. In the literature \eqref{eq:intro_derivative} has sometimes been evaluated using a set of assumptions that requires $\UU =\RR^n$. We explain this issue in Section \ref{A_remark_on_the_technique}, and state our maximum principle with weaker assumptions so that we can overcome this problem and indeed allow for $\UU \subsetneq \RR^n$.

Key to our approach is the non-anticipating derivative $\Dd$, an operator from  $L_2(\Omega, \Gg, \Prob)$ to the space of integrable random fields, see, e.g. \cite{DiNunno2002,DiNunno2010}. 
The operator $\Dd$ coincides with the dual of the It\^o non-anticipating stochastic integral with respect to a general martingale random field. Indeed we have that, for $\xi\in L_2(\Omega, \Gg, \Prob)$, 
\begin{align}
\E\Big[ \xi &\int\limits_0^T \int\limits_\Zz \kappa(s,z) \ins \mu(d s,d z) \Big] \nonumber = \E\Big[ \int\limits_0^T \int\limits_\Zz  (\Dd_{s,z} \xi) \kappa(s,z) \ins \Lambda(ds,dz) \big]. \label{eq:Duality_formula_intro}
\end{align}
Here $\Lambda$ represents the conditional variance measure associated to $\mu$. For continuous semi-martingales $\Lambda$ would be the quadratic variation, while for pure-jump semi-martingales $\Lambda$ would be the predictable compensator for the jumps (with respect to $\GG$). These concepts are further detailed in the forthcoming sections \ref{section:martinga_random_field_maximum}. 

\medskip

\begin{remark}
Here we will briefly discuss why this optimization problem cannot be (easily) solved by the usual BSDE-methods. In a BSDE-type approach, (see e.g. \cite{Peng1990, Framstad2004, Tang1994}) we would define a ``Hamiltonian''of type
\begin{equation*}
H(t,x,u,p,q,r) = f_t(u,x) + b_t(u,x) p_t + \int\limits_\Zz \phi_t(z) q_t(z) \ins \lambda_t(dz)
\end{equation*}
where $p$ and $q$ are solutions to the adjoint BSDE:
\begin{align}
dp_t &=  \frac{\partial}{\partial x} H(t,X_t,u_t,p_t, q_t) \ins dt + \int\limits_t^T \int\limits_\Zz q_t \ins \mu(dt,dz), \nonumber \\
p_T &= g'(X_T). \label{eq:BSDE_no_Solution}
\end{align}
The optimal solution is then described via conditions on $H$. Here $\lambda_t$ will be defined precisely in section 2, but if $\mu$ is a Brownian motion, then we just have $\Zz=\{0\}$ and $\lambda_t(dz) = 1$.

Does equation \ref{eq:BSDE_no_Solution} have a solution? Naturally, the answer depends on the noises in question and the requirements on $p$ and $q$. Suppose we require
\begin{enumerate}
\item  $\E[\sup_t p_t^2 ] <2$ 
\label{itemBSDE1}
\item $\E\big[\int_0^T q_t(z)^2 \Lambda(dt,dz)\big] < \infty$,
\label{itemBSDE2}
\item $p$ and $q$ are $\GG$-adapted.
\label{itemBSDE3}
\end{enumerate}
Suppose also that the martingale representation holds for $\GG$, \ie that every square integrable $\GG$-martingale $M$ has representation
\begin{equation*}
M_t = M_0 + \int\limits_0^t \int\limits_\Zz \eta_s(z) \ins \mu(ds,dz)
\end{equation*}
by means of a predictable, square integrable random field $\eta$. If the martingale represention property holds for $\GG$ in terms of $\mu$, then \eqref{eq:BSDE_no_Solution} will have a solution, at least for the mild conditions found in \cite{Jianming2000} (see also  \cite[Section 4.3]{Protter2005} on the topic of the martingale representation property). 
However, if the martingale representation does not hold for $\GG$, then \textbf{equation \eqref{eq:BSDE_no_Solution} may have no solution satisfying \ref{itemBSDE1}-\ref{itemBSDE2}-\ref{itemBSDE3}}. Indeed, the literature on optimization with BSDEs has focused on the cases where such a martingale representation is available.

One example where the martingale representation property does not hold is when $u$ has conditionally independent increments and $\GG$ is the filtration generated by the noise. In this case \eqref{eq:BSDE_no_Solution} may have no solutions \cite[Remark 4.6]{BSDE}. (In \cite{BSDE}, a solution can only be found by considering a filtration with anticipating information.) Other examples can also be found by, e.g., problems with partial information or letting $\GG$ be the filtration generated by $\mu$ and $g(X_T,\omega)$ involve a random variable that is not $\Gg_t$-measurable for $t<T$.
\end{remark}

Also note that when a direct BSDE-method is possible, our approach provides a new way of computing the adjoint equations.

\medskip




Maximum principles using the duality relation of the Malliavin derivative with the Skorohod integral have been studied in \cite{DiNunno2009b,Brandis2012}. This limits the study to L\'evy processes and, additionally, some restrictions are imposed to match the domains of the Malliavin derivative. Here we instead use the non-anticipating stochastic derivative, which enables us to treat very general martingale noises. Furthermore, in the case of L\'evy noise, we reduce assumptions of Malliavin differentiable random variables to square integrability. Since the non-anticipating derivative coincides with the Malliavin derivative when both are well defined, this extends previous results. Indeed, the proof of our maximum principle will borrow heavily from the ideas found in \cite{Brandis2012}.

For the portfolio problem with default risk, the main result is extended to a simpler sufficient condition for optimal control. Note that this example is not of L\'evy type, nor is the state process (in general) Markovian.


\medskip

In this paper, the maximum principle is studied in Section \ref{section:Maximum_principle}. But first we discuss the martingale random fields and stochastic non-anticipating derivative in Section \ref{section:martinga_random_field_maximum} and the details on the optimization problem in Section \ref{section:Optimization_problem}.  An important detail on the formulation of our type of maximum principle, that has previously been overlooked in the literature, is presented in Section \ref{A_remark_on_the_technique}.  
Section \ref{section:application_default_risk} presents an application to portfolio optimization in a market with assets subject to default risk.

\section{The martingale random field}
\label{section:martinga_random_field_maximum}

We now retrieve the stochastic integration and the non-anticipating stochastic derivative over a martingale random field $\mu$. We refer to \cite{DiNunno2010} for a detailed discussion on these concepts.

Let $(\Omega, \Gg,\Prob)$ be a complete probability space equipped with a right-continuous filtration $\GG := \{ \Gg_t, \, t\in [0,T]\}$. Let $\Zz$ be a separable topological space. We denote $\Bb_{\Zz}$ as the Borel $\sigma$-algebra on $\Zz$ and $\Bb_{[0,T]\times \Zz}$ as the Borel $\sigma$-algebra on the product space $[0,T]\times \Zz$. Note that $\Bb_{[0,T]\times \Zz}$ is generated by a semi-ring of sets of type
\begin{equation*}
\Delta = (t,s] \times Z, \quad 0 \leq t < s \leq T,\, Z \in \Bb_{\Zz}.
\end{equation*}

We say that the stochastic set function $\mu(\Delta)$, $\Delta \in \Bb_{[0,T]\times \Zz}$ is a martingale random field in $L_2(\Omega, \Gg, \Prob)$ on $[0,T]\times \Zz$ (with conditionally orthogonal values) with respect to $\GG$ if it satisfies the following properties \cite[Definition 2.1]{DiNunno2010}:
\begin{enumerate}
\item $\mu$ has a tight, $\sigma$-finite variance measure $V(\Delta) = E\big[\mu(\Delta)^2]$, $\Delta \in \Bb_{[0,T]\times \Zz}$, which satisfies $V(\{0\}\times \Zz)=0$. \label{enu:1}
\item $\mu$ is additive, \ie for pairwise disjoint sets $\Delta_1, \dots, \Delta_K$: $V(\Delta_k) < \infty$
\begin{equation*}
\mu \big( \bigcup_{k=1}^K \Delta_k) = \sum_{k=1}^K \mu(\Delta_k) 
\end{equation*}
and $\sigma$-additive in $L_2$. 
\label{enu:2}
\item $\mu$ is  $\GG$-adapted. \label{enu:3}
\item $\mu$ has the \emph{martingale property}. Consider $\Delta \subseteq (t,T] \times \Zz$. We have:
\begin{equation*}
\E\Big[ \mu(\Delta) \,\Big|\, \Gg_t \Big] = 0. 
\end{equation*}
\label{enu:4}
\item $\mu$ has conditionally orthogonal values. For any  $\Delta_1, \Delta_2 \subseteq (t,T]\times \Zz$ such that $\Delta_1 \cap \Delta_2 = \emptyset$ we have:
\begin{align*}
\E \Big[ \mu(\Delta_1) \mu (\Delta_2) \,\Big|\, \Gg_t \Big]  = 0.
\end{align*}
\label{enu:5}
\end{enumerate}

In particular, any finite sums of orthogonal, square integrable martingales would be a martingale random field in the sense of \ref{enu:1}-\ref{enu:2}-\ref{enu:3}-\ref{enu:4}-\ref{enu:5} above. In general, the filtration $\GG$ does not need to be the one generated by $\mu$.

The $\GG$-predictable $\sigma$-algebra on $\Omega\times [0,T]\times \Zz$ is denoted by $\Pp_{[0,T]\times \Zz}$ and is generated by sets of type 
\begin{equation*}
\Delta = A \times (t,s] \times Z, \quad 0 \leq t < s \leq T,\, Z \in \Bb_{\Zz},\, A\in \Gg_t.
\end{equation*}
The $\GG$-predictable $\sigma$-algebra $\Omega\times [0,T]$ is denoted by $\Pp_{[0,T]}$ and is generated by sets of type $\Delta = A \times (t,s], \;0 \leq t < s \leq T, A\in \Gg_t$. 
%
On $(\Omega\times [0,T]\times \Zz,\Pp_{[0,T]\times \Zz})$ the random field $\mu$ has a $\sigma$-finite conditional random variance measure \cite[Theorem 2.1]{DiNunno2010}. For $\GG$-martingale processes the conditional variance measure is the $\GG$-predictable compensator. 
We denote this conditional variance measure by $\Lambda$, and it has the following properties
\begin{align*}
\E\big[ \mu(\Delta)^2 \big|\Gg_t \big] &= \Lambda(\Delta), \quad \text{in $L_1(\Omega,\Gg,\Prob)$ for }\Delta \subseteq(t,T]\times \Zz, \\
\E \big[ \mu(\Delta)^2  \big] &= \E \big[ \Lambda(\Delta) \big].
\end{align*}
For later purposes we assume that $\Lambda$ is absolutely continuous with respect to the Lebesgue measure on $[0,T]$.
Namely we assume that there exists a transition kernel $\lambda$ from $(\Omega\times [0,T],\Pp_{[0,T]})$ to $(\Zz, \Bb_{\Zz})$ such that $\Lambda(\omega,dt,dz) = \lambda_t(\omega,dz)\ins dt$. Meaning that the mapping $(\omega,t) \to \lambda_t(\omega,Z)$ is $\Pp_{[0,T]}$ measurable for every $Z\in \Bb_{\Zz}$ and $\lambda_t(\omega, \cdot)$ is measure on $(\Zz, \Bb_{\Zz})$ for every $(\omega,t) \in \Omega\times [0,T]$. We refer to \cite{Cinlar2011} for further details on transition kernels.

We denote $\II$ as the set of $\GG$-predictable random fields $\phi :\Omega\times [0,T]\times \Zz\to\RR$ satisfying 
\begin{equation*}
\| \phi \|_{\II} := \E \Big[ \int\limits_0^T \int\limits_\Zz \phi(s,z)^2 \ins \lambda_s(dz) ds  \Big]^{\frac{1}{2}} < \infty.
\end{equation*}
We say that $\phi\in \II$ is a simple random field if it can be expressed as a finite sum of type
\begin{equation}
\phi(s,z,\omega) = \sum_{i=1}^N \phi_i(\omega) \ind_{\Delta_i}(s,z), 
\label{eq:simplefunction_inMax}
\end{equation}
where $\Delta_i = (t_i,s_i]\times Z_i$ and $\phi_i$ are bounded, $\Gg_{t_i}$-measurable random variables for $i=1,\dots N<\infty$. Simple, $\GG$-predictable random fields are dense in $\II$ by the usual It\^o integration type arguments and we have that, for every $\phi\in \II$:
\begin{align}
\E \Big[ \big( \int\limits_0^T \int\limits_\Zz \phi(s,z) \ins \mu(ds,dz) \big)^2 \Big] &= \E \Big[ \int\limits_0^T \int\limits_\Zz \phi(s,z)^2 \ins \Lambda(ds,dz) \Big] \nonumber \\
&= \E \Big[ \int\limits_0^T \int\limits_\Zz \phi(s,z)^2 \ins \lambda_s(dz) ds  \Big]. 
\label{eq:ito_isometry}
\end{align}
Remark also that $\phi \in \II$ implies that
\begin{equation*}
\int\limits_\Zz \phi_t(z)^2\ins \lambda_t(dz) < \infty, \quad dt\times d\Prob\text { a.e.} 
\end{equation*}
Note that $\int_0^t \int_\Zz \phi(s,z) \ins \mu(ds,dz)$, $t\in [0,T]$ is a $\GG$-martingale with values in $L_2$.


\medskip

The \emph{non-anticipating stochastic derivative} is a characterization of the integrand in the Kunita-Watanabe decomposition, developed in \cite{DiNunno2002,DiNunno2003,DiNunno2007,DiNunno2007b,DiNunno2010}. It is the adjoint (linear) operator $\Dd= I^*$ of the stochastic integral:
\begin{equation*}
\Dd: L_2(\Omega,\Gg,\Prob) \Longrightarrow \II.
\end{equation*}
A full characterization is given in constructive form using the elements of the following dissecting system, a sequence of partitions of $[0,T]\times \Zz$. 
Let $A_n \subseteq [0,T]\times \Zz$ be an increasing sequence of Borel-sets such that $V(A_n) < \infty$ for all $n\in{\mathbb{N}}$ and $\cup_{n=1}^\infty A_n =  [0,T]\times \Zz$. 
For every $n$ we chose a partition $\{\Delta_{n,k}\}$, $k=1,\dots, K_n < \infty$, of $A_n$ such that\footnote{Here it is possible to substitute $1/n$ with any sequence $\epsilon_n$ such that $\epsilon_n \to 0^+$ as $n\to \infty$.}
\begin{align}
& \bigcup_{1\leq k \leq K_n} \Delta_{n,k} = A_n, \label{eq:partition_1} \\
& \Delta_{n,k} = (t_{n,k},s_{n,k}]\times Z_{n,k},\quad 0\leq t_{n,k}< s_{n,k}  \leq T,\; Z_{n,k}\in \Bb_\Zz \label{eq:partition_2} \\
& \max_{1\leq k \leq K_n} (s_{n,k} - t_{n,k})  < 1/n,\label{eq:partition_4} \\
& \max_{1\leq k \leq K_n} V(\Delta_{n,k}) < 1/n, \label{eq:partition_3} \\
& \Delta_{n,k}\cap \Delta_{n,j} =\emptyset \text{ for } k\neq j. \label{eq:partition_5}
\end{align}
Moreover, the partitions are nested in the sense that
\begin{align}
& \Delta_{n,k}\cap \Delta_{n+1,j} =\emptyset \text{ or } \Delta_{n+1,j}. \label{eq:partition_6}
\end{align}
The non-anticipating stochastic derivative can be represented as the limit \cite[Theorem 3.1]{DiNunno2010}
\begin{equation}
\Dd \xi = \lim_{n\to\infty} \phi_n
\label{eq:DdG_limit_inMax}
\end{equation}
with convergence in $\II$ of the stochastic functions of type \eqref{eq:simplefunction_inMax} given by
\begin{equation}
\phi_n (t,z) := \sum_{k=1}^{K_n} \E \Big[ \xi \frac{\mu(\Delta_{n,k})}{\Lambda(\Delta_{n,k})} \Big| \Gg_{t_{n,k}} \Big] \ind_{\Delta_{n,k}}(t,z)
\label{eq:DdGlimit_simplefunctions_inMax}
\end{equation}
where $\Delta_{n,k}=(t_{n,k}, s_{n,k}]\times Z_{n,k}$ refers to the partion of $A_n$ described in \eqref{eq:partition_1}-\eqref{eq:partition_6}.
We have the following result \cite[Theorem 3.1]{DiNunno2010}:
\begin{teorem}
All $\xi \in L_2(\Omega, \Gg, \Prob)$ have representation
\begin{equation}
\xi =\xi_0+ \int\limits_0^T \int\limits_\Zz \Dd_{t,z} \xi  \ins\mu(dt,dz).
\label{eq:representation_with_Dd_inMax}
\end{equation}
Moreover $\Dd \xi_0 \equiv 0$ and $\xi_0 \in L_2(\Omega, \Gg, \Prob)$ is orthogonal to space generated by \\
$\Big\{\int_0^T\int_\Zz \phi(s,z)\ins \mu(ds,dz) \Big|\; \phi \in \II \Big\}$.
\label{theorem:representation_with_Dd_inMax}
\end{teorem}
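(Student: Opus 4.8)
The plan is to realize \eqref{eq:representation_with_Dd_inMax} as the orthogonal decomposition of $L_2(\Omega,\Gg,\Prob)$ induced by the stochastic integral, and then to identify the resulting integrand with $\Dd\xi$ by showing that the explicit approximants \eqref{eq:DdGlimit_simplefunctions_inMax} converge to the projection.

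First I would set up the Hilbert space geometry. By the isometry \eqref{eq:ito_isometry}, the map $I:\II\to L_2(\Omega,\Gg,\Prob)$, $I\phi := \int_0^T\int_\Zz \phi(s,z)\,\mu(ds,dz)$, is a linear isometry. Since $\II$ is complete and $I$ preserves norms, the image $\mathcal{M}:=\{I\phi:\phi\in\II\}$ is a closed subspace of $L_2(\Omega,\Gg,\Prob)$. The projection theorem then yields, for every $\xi\in L_2(\Omega,\Gg,\Prob)$, a unique decomposition $\xi = I\psi + \xi_0$ with $\psi\in\II$ (unique because $I$ is injective) and $\xi_0\in\mathcal{M}^\perp$. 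This already provides a representation of the form \eqref{eq:representation_with_Dd_inMax} together with the asserted orthogonality of $\xi_0$; it remains to prove $\psi=\Dd\xi$, that is, that $\psi$ is the $\II$-limit of the $\phi_n$ in \eqref{eq:DdGlimit_simplefunctions_inMax}, and that $\Dd\xi_0=0$.

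The core step is the identification $\psi=\lim_n\phi_n$. Testing the defining relation $\E[\xi\,I\phi]=\langle\psi,\phi\rangle_\II$ (valid because $\xi_0\perp\mathcal{M}$ and $I$ is an isometry) against $\phi = g\,\ind_{\Delta_{n,k}}$ with $g$ bounded and $\Gg_{t_{n,k}}$-measurable, and using that $\Lambda(\Delta_{n,k})$ is $\Gg_{t_{n,k}}$-measurable, I would obtain
\begin{equation*}
\E\Big[\xi\,\frac{\mu(\Delta_{n,k})}{\Lambda(\Delta_{n,k})}\,\Big|\,\Gg_{t_{n,k}}\Big] = \frac{1}{\Lambda(\Delta_{n,k})}\,\E\Big[\int\limits_{\Delta_{n,k}}\psi\ins d\Lambda\,\Big|\,\Gg_{t_{n,k}}\Big].
\end{equation*}
Hence $\phi_n$ is precisely the $\Lambda$-weighted conditional average of $\psi$ over the cells of the partition, i.e. the orthogonal projection of $\psi$ onto the finite-dimensional space of simple fields adapted to the $n$-th partition. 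Because the dissecting system is nested \eqref{eq:partition_6}, the sequence $(\phi_n)$ is a martingale of such conditional expectations, and because the mesh conditions \eqref{eq:partition_4} and \eqref{eq:partition_3} force the generated $\sigma$-algebras to increase to the full $\GG$-predictable $\sigma$-algebra $\Pp_{[0,T]\times\Zz}$, the martingale convergence theorem gives $\phi_n\to\psi$ in $\II$. This proves $\Dd\xi=\psi$, and applying the same computation to $\xi_0\in\mathcal{M}^\perp$ (for which every such conditional expectation vanishes) yields $\Dd\xi_0=0$.

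I expect the main obstacle to be the convergence $\phi_n\to\psi$: one must argue carefully that the nested dissecting system generates $\Pp_{[0,T]\times\Zz}$ and that the conditional-average operators are genuinely the associated martingale, so that $L_2$-martingale convergence applies in the $\II$-norm over all of $\Omega\times[0,T]\times\Zz$. The Hilbert-space skeleton (isometry, closed range, projection) is routine once \eqref{eq:ito_isometry} and the density of simple fields are in hand.
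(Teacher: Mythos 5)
First, a point of reference: the paper contains no proof of this theorem at all --- it is quoted from \cite[Theorem 3.1]{DiNunno2010} --- so your argument can only be judged on its own merits. Your Hilbert-space skeleton is the standard and correct route to this Kunita--Watanabe-type statement: the isometry \eqref{eq:ito_isometry} makes the range $\mathcal{M}$ of the integral operator $I$ a closed subspace of $L_2(\Omega,\Gg,\Prob)$, the projection theorem gives $\xi = I\psi + \xi_0$ with $\xi_0\perp\mathcal{M}$ and $\psi\in\II$ unique, and testing the duality relation against $\phi = g\,\ind_{\Delta_{n,k}}$ identifies $\phi_n$ of \eqref{eq:DdGlimit_simplefunctions_inMax} with the orthogonal projection $P_n\psi$ onto the subspace $\II_n$ of simple fields subordinate to the $n$-th partition. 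Two caveats there: $\II_n$ is not finite-dimensional (its coefficients range over $L_2(\Gg_{t_{n,k}})$), and your manipulation pulls $\Lambda(\Delta_{n,k})$ out of a conditional expectation, which requires $\Lambda(\Delta_{n,k})$ to be $\Gg_{t_{n,k}}$-measurable; the paper's stated property of $\Lambda$ suggests this, but it fails for instance for the doubly stochastic Poisson field of Section \ref{section:application_default_risk}, where the robust projection coefficient is $\E[\int_{\Delta_{n,k}}\psi\,d\Lambda\,|\,\Gg_{t_{n,k}}]\big/\E[\Lambda(\Delta_{n,k})\,|\,\Gg_{t_{n,k}}]$.

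The genuine gap is exactly the step you flagged as the main obstacle, and your proposed justification of it is false: the mesh conditions \eqref{eq:partition_4} and \eqref{eq:partition_3} do \emph{not} force the $\sigma$-algebras generated by the cells to increase to the full predictable $\sigma$-algebra $\Pp_{[0,T]\times\Zz}$, because neither condition forces any refinement in the $\Zz$-direction. Concretely, take $\Zz=\{1,2\}$, $\mu(dt,\{j\}) = dW^j_t$ for two independent Brownian motions, and cells $\Delta_{n,k}=(t_{n,k},s_{n,k}]\times\{1,2\}$ that never separate the two points of $\Zz$: all of \eqref{eq:partition_1}--\eqref{eq:partition_6} hold (the variance mesh is twice the time mesh), yet for $\xi=\int_0^T\eta_s\,dW^1_s$ one computes $\lim_n\phi_n(t,z)=\tfrac12\eta_t$ for both $z$, so that $I(\lim_n\phi_n)=\tfrac12\int_0^T\eta_s\,d(W^1_s+W^2_s)$ and the remainder $\xi-I(\lim_n\phi_n)=\tfrac12\int_0^T\eta_s\,d(W^1_s-W^2_s)$ lies \emph{in} $\mathcal{M}$ rather than in $\mathcal{M}^\perp$; the representation \eqref{eq:representation_with_Dd_inMax} fails. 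What is missing is the point-separating (generating) property of a dissecting system --- any two distinct points of $[0,T]\times\Zz$ eventually lie in different cells, equivalently $\sigma(\{\Delta_{n,k}\})=\Bb_{[0,T]\times\Zz}$ --- which is part of the definition in \cite{DiNunno2010} but is not among the conditions restated in this paper, so you have in fact inherited a gap in the paper's own summary. Once that property is imposed, your increasing-subspace (martingale convergence) argument does close: the closure of $\bigcup_n\II_n$ then contains all simple predictable fields and hence equals $\II$, giving $\phi_n\to\psi$ in $\II$, and the same duality test applied to $\xi_0\in\mathcal{M}^\perp$ gives $\Dd\xi_0\equiv 0$.
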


Indeed, by the orthogonality of the sum in \eqref{eq:representation_with_Dd_inMax}, one can see that the following duality rule is verified: Let $\xi \in L_2(\Omega, \Gg, \Prob)$ and $\kappa \in \II$, then
\begin{align}
\E\Big[ \xi &\int\limits_0^T \int\limits_\Zz \kappa(s,z) ~\mu(d s,d z) \Big] \nonumber \\
&= \E\Big[ \Big(  \xi_0 + \int\limits_0^T \int\limits_\Zz \Dd_{s,z} \xi \ins\mu(d s,d z) \Big) \int\limits_0^T \int\limits_\Zz \kappa(s,z) \ins \mu(ds, dz) \Big] \nonumber \\
&= \E\Big[ \int\limits_0^T \int\limits_\Zz  (\Dd_{s,z} \xi) \kappa(s,z) \ins \Lambda(ds,dz) \big]. \label{eq:Duality_formula}
\end{align}

\section{Optimization problem}
\label{section:Optimization_problem}


Define the state process $X_t$, $t\in[0,T]$ by $X_0=a \in \RR$ and
\begin{equation*}
X_t^{(u)} = X_0 + \int\limits_0^t b_s(u_s,X_s) \ins ds + \int\limits_0^t \int\limits_\Zz \phi_s(z,u_s,X_{s\minus}) \ins \mu(ds,dz).
\end{equation*}
Here $b: \Omega \times [0,T]\times \UU \times \RR\to \RR$ and $\phi: \Omega \times [0,T]\times \Zz \times \UU \times \RR \to \RR$ are $\GG$-predictable. Moreover $\phi \in \II$. We assume that $X$ has an unique strong solution and note that $X$ is $\GG$-adapted. 
The stochastic process $u$ is the control taking values in an open and convex set $\UU \subseteq \RR^n$.

In the performance functional \eqref{eq:performance_functional_maximum_intro},
\begin{equation}
J(u) = \E \Big[ \int\limits_0^T f_t(u_t,X_t) \ins dt + g(X_T) \Big],
\label{eq:performance_functional_maximum}
\end{equation}
we have $f: \Omega\times [0,T]\times \UU \times \RR \to \RR$ and $g:\Omega \times \RR \to \RR$. Remark that we have allowed for $g$ and $f$ to depend on additional randomness besides $u$ and $X$, and assume that they are both measurable. 

We assume $f$ and $b$ are continuously differentiable in $x\in \RR$ and $u\in\UU$ for all $t\in [0,T]$ and almost all $\omega \in \Omega$. We denote these derivatives $\frac{\partial f_s}{\partial x}$, $\frac{\partial f_s}{\partial u}$, similarly for $b$ and $\phi$. Remark that $\frac{\partial f_s}{\partial u} \in \RR^n$ since $u$ is $n$-dimensional. We will denote $\cdot$ as the inner product in $\RR^n$ when appropiate. Furthermore $g$ is continuously differentiable with respect to $x\in\RR$ a.s., and we denote this derivative by $g'$.

The random field $\phi$ is continuously differentiable in $x\in \RR$ and $u\in \UU$ for almost all $(\omega,t,z)\in \Omega\times [0,T]\times \Zz$. We assume that $\frac{\partial \phi}{\partial x} \in \II$ and with $u =( u^{1}, \dots ,u^{n} ) \in \RR^n$, $\frac{\partial \phi}{\partial u^{j}} \in \II$ for $j=1,\dots n$. Finally we define the $\GG$-semi-martingale
%
\begin{equation*}
M_s := \int\limits_0^s \frac{\partial b_r}{\partial x}(u_r,X_r) \ins dr + \int\limits_0^s \int\limits_{\Zz}  \frac{\partial \phi_r}{\partial x}(u_r,X_r) \ins \mu(dr,dz),\quad  s\in [0,T]. \\
\end{equation*}
%

The \emph{first variation process} $G_s(t)$, $s\in[0,T]$, is the solution to the equation
\begin{align}
G_s(t) &:= 1 + \int\limits_t^s G_r(t) \ins d M_r, \quad s\in[t,T], \nonumber\\
&= 1 + \int\limits_t^s G_r(t)  \frac{\partial b_r}{\partial x}(u_r,X_r) \ins dr + \int\limits_t^s  \int\limits_{\Zz} G_r(t) \frac{\partial \phi_r}{\partial x}(z,u_r,X_r) \ins \mu(dr,dz). 
\label{eq:G_defined} 
\end{align}
The solution of \eqref{eq:G_defined} is given as follows (\cite[Theorem II.37]{Protter2005})
\begin{equation*}
G_s(t) = \exp \Big\{ M_s(t) -\frac{1}{2} [M(t) , M(t)]_s \Big\} \prod_{t<s\leq T} \big(1+\Delta M_s(t)\big ) \exp\{-\Delta M_s(t)\}
\end{equation*}
where for any $t$, $M(t)$ is the $\GG$-semi-martingale defined by $M_s(t) = \int_t^s \ins dM_r$ for $t<s\leq T$ and $M_s(t)=0$ for $s\leq t$.
Furthermore we define, where $t\in[0,T]$,
\begin{align}
K_t :=&\; K_t^{(u,X)} = g'(X_T)+\int\limits_t^T \frac{\partial f_s}{\partial x}(u_s,X_s) \ins ds \label{eq:K_defined}, 
\\
\Dd_{t,z} K_t :=&\; \Dd_{t,z} g'(X_T)+\Dd_{t,z} \big( \int\limits_t^T   \frac{\partial f_s}{\partial x}(u_s,X_s)  \ins ds\big), \label{eq:DK_defined} 
\displaybreak[0] \\
F_t(u,X_t) =&\; K_t \frac{\partial b_t}{\partial x} (u_t,X_t) +\int\limits_\Zz ( \Dd_{t,z} K_t ) \frac{\partial\phi_t}{\partial x}(z,u_t,X_t) \ins  \lambda_t(dz), 
\label{eq:F_defined} \\
p_t :=&\; p_t^{(u,X)}=  K_t + \int\limits_t^T F_s(u_s,X_s) G_s(t) \ins ds, 
\label{eq:p_defined} \displaybreak[0] \\
\kappa_t :=&\; \kappa_t^{(u,X)}= \Dd_{t,z} p_t. 
\label{eq:kappa_defined}
\end{align}
In order to have the above quantities well-defined the following requirements are needed:
\begin{antagelse} The control $u$ with state process $X^{(u)}$ satisfies
\begin{align}
\E \big[ g'(X_T)^2  \big] &< \infty, \label{eq:dg_integrable} \\
\E \big[ \int\limits_0^T \frac{\partial f_t}{\partial x} (u_t,X_t)^2 \ins dt \big] &< \infty, \label{dxf_integrable} \\
\E \big[ \int\limits_t^T \big( F_s G_s(t) \big)^2 \ins ds \big] &< \infty, \quad \text{for all } t\in [0,T]. \label{eq:FG_integrable}
\end{align}
\label{antagelse:integrerbarhet} 
\end{antagelse}

\begin{remark}
Suppose $\mu$ is a Brownian or L\'evy noise and $\GG$ is generated by $\mu$ with $\Gg = \Gg_T$. If using the duality relation of Malliavin calculus \eqref{eq:dg_integrable}-\eqref{dxf_integrable}-\eqref{eq:FG_integrable} would be stated in terms of Malliavin differentiability, see \cite[Equation 3.5]{Brandis2012}. Meaning that both $g'(X_T)$ and $\int_t^T \big( F_s G_s(t) \big)^2 \ins ds$ need to be in the domain of the Malliavin derivative, a space strictly smaller than $L_2(\Omega, \Gg, \Prob)$. In addition, \eqref{dxf_integrable} would be replaced by the Malliavin differentiability of $\frac{\partial f_t}{\partial x} (u_t,X_t)$ and the integrability of $\Md_t \frac{\partial f_t}{\partial x} (u_t,X_t)$ so that $\int_0^T \Md_t \frac{\partial f_t}{\partial x} (u_t,X_t) \ins dt$ would be well defined (where $\Md$ is the Malliavin derivative) since the arguments in the forthcoming \eqref{eq:interchange_stochastic_integration} does not apply.

\end{remark}

\medskip
For a given control $u$ with state process $X=X^{(u)}$, we define the Hamiltonian by
\begin{align}
\Ham_t(v,x) =&\; \Ham_t^{(u,X)}(v,x) \nonumber \\
:=&\; f_t(v,x) + b_t(v,x) p_t^{(u,X)} 
+ \int\limits_\Zz \kappa_t^{(u,X)} (z)\phi_t(z,v,x) \ins \lambda_t(dz),
\label{eq:Hamiltionian}
\end{align}
where $t\in [0,T]$, $v\in \UU$ and $x\in \RR$.

\section{Maximum principle}
\label{section:Maximum_principle}

Let $\FF := \{ \Ff_t, t\in[0,T]\}$ be a right continuous filtration such that $\Ff_t \subseteq \Gg_t$ for all $t\in[0,T]$. We state the optimization result for $\FF$, naturally we can have $\FF = \GG$.

\begin{definisjon}
We say that $u$ is an admissible control if $u:\Omega \times [0,T] \to \UU$ is $\FF$-predictable, 
Assumption \ref{antagelse:integrerbarhet} 
holds
and 
\begin{equation}
\E \Big[ \int\limits_0^T f_t(u_t,X_t)^2 \ins dt + g(X_T)^2 \Big] < \infty.
\label{eq:fg_integrable}
\end{equation}
We denote the set of admissible controls by $\Af$.
\end{definisjon}
The following assumption is needed for the controls on which we apply the maximum principle.
\begin{antagelse}
Let $u \in \Af$ be fixed. For this $u$ we assume that for any $\FF$-predictable and bounded process $\beta$ satisfying
\begin{equation}
u_t - \beta_t \in \UU \quad \text{and} \quad u_t+\beta_t \in \UU \quad dt\times d\Prob \text{ a.e.} 
\label{eq:inclusion_condition}
\end{equation}
%
there exists a $\delta>0$ such that
\begin{enumerate}
\renewcommand{\theenumi}{A\arabic{enumi})}
\item $u+y\beta\in \Af$ for all $|y| \leq \delta$.
\label{item:maximum_convexity}
\item 
The family 
\begin{equation}
\Big\{ \frac{\partial f_t}{\partial x}\big(u_t + y \beta_t,X^{u+y\beta}_t \big)\frac{\partial}{\partial y} X^{u+y\beta_t}_t + \frac{\partial f_t}{\partial u}\big(u_t + y \beta_t,X^{u+y\beta}\big)\beta_t \Big\}_{y\in(-\delta,\delta)}
\label{eq:maximum_uniform1}
\end{equation}
is uniformly $dt \times d\Prob$-integrable, and the family
\begin{equation}
\Big\{ g'\big(X_T^{u+y\beta}\big) \frac{\partial}{\partial y} X_T^{u+y\beta} \Big\}_{y\in(-\delta,\delta)}
\label{eq:maximum_uniform2}
\end{equation}
is uniformly $\Prob$-integrable. 
\label{item:uniform_integrability_maximum}
\item 
The process $Y_t^{(u,\beta)} = \frac{\partial}{\partial y} X^{u+y\beta}_t |_{y=0}$ exists as an element of $L_2(\Omega, \Gg, \Prob)$ for all $t\in [0,T]$ and satisfies
\begin{align}
Y_t =&\; Y_t^{(u,\beta)} = \frac{\partial}{\partial y} X^{u+y\beta}_t \big|_{y=0} \nonumber \\
=&\; \int\limits_0^t\big[ \frac{\partial b_s}{\partial x}(u_s,X_s)Y_s +\frac{\partial b_s}{\partial u}(u_s,X_s) \cdot\beta_s \big] \ins ds \nonumber \\
&+\int\limits_0^t\int\limits_\Zz \big[  \frac{\partial \phi_s}{\partial x}(z,u_s,X_s)Y_s +\frac{\partial \phi_s}{\partial u}(z,u_s,X_s)\cdot\beta_s \big] \ins \mu(ds,dz).
\label{eq:Y_sde}
 \end{align}
\label{item:Y_exist_maximum}
\end{enumerate}
\label{antagelse:integrasjon}
\end{antagelse}
In a converse conclusion in the forthcoming maximum principle, we will also require the following assumption:
\begin{antagelse} ~
\begin{enumerate}
\renewcommand{\theenumi}{A\arabic{enumi})}
\setcounter{enumi}{3}
\item If $\alpha$ is a random variable taking values in $\UU$ a.s. then (with $0\leq t<r\leq T$)
\begin{equation*}
u_s(\omega) = \alpha(\omega) \ind_{(t,r]}(s), 
\end{equation*}
is an admissible control (\ie $u\in \Af$).
\label{item:simple_admissible_maximum}
\end{enumerate}
\label{assumption:simple_admissible_maximum}
\end{antagelse}
%

A control $\hat{u}\in \Af$ is a ``local maximum'' if 
\begin{equation}
J(\hat{u}) \geq  J(\hat{u} + y\beta ), \quad |y| \leq \delta, 
\label{eq:local_maximum_maximum}
\end{equation}
for all bounded $\FF$-predictable $\beta$ and some $\delta>0$ that may depend on $\beta$. Meaning that we cannot improve $J(\hat{u})$ by making ``bounded'' pertubations of $\hat{u}$. 
Thus any solution to \eqref{eq:performance_functional_maximum}, $J(\hat{u})= \sup_{u\in \Af} J(u)$, must also be a local maximum. If $\hat{u}$ is a local maximum, we must naturally have
\begin{equation}
\frac{\partial}{\partial y} J(\hat{u}+y\beta) \big|_{y=0} =0. 
\label{eq:critical_point_intro_maximum}
\end{equation} 
The converse conclusions are not however true. Not every $u$ satisfying \eqref{eq:critical_point_intro_maximum} is a local maximum and a local maximum is not necessarily the optimal solution to \eqref{eq:performance_functional_maximum}.


\begin{teorem}
Let $\hat{u}$ be an admissible control and suppose $\hat{u}$ satisfies Assumption \ref{antagelse:integrasjon}. Denote 
\begin{align*}
 \hat{X}_t =&\; X_t^{(\hat{u})}\\
\hat{\Ham}_t(v,\hat{X}_t) =&\;  f_t(v,\hat{X}_t) + b_t(\lambda_t,v,\hat{X}_t) \hat{p}_t + \\
&+ \int\limits_\Zz \hat{\kappa}_t(z) \phi_t(z,v,\hat{X}_t) \ins \lambda_t(dz), \quad v\in \UU\subseteq \RR,
\end{align*}
with
\begin{align*}
\hat{p}_t &= p_t^{(\hat{u},\hat{X})}, \\
\hat{\kappa}_t &= \kappa_t^{(\hat{u},\hat{X})}.
\end{align*}
If $\hat{u}$ is a critical point for $J(u)$, in the sense that
\begin{equation*}
\frac{\partial}{\partial y} J(\hat{u}+y\beta) \big|_{y=0} =0 
\end{equation*}
for all bounded, $\FF$-predictable processes $\beta$ such that $\hat{u}_t \pm \beta_t \in \UU$ $dt\times d\Prob$-a.e., then
\begin{equation}
\E \Big[  \frac{\partial \Ham_t}{\partial u}(\hat{u}_t,\hat{X}_t) \Big| \Ff_t \Big] = 0, \quad dt\times d\Prob \text{-a.e.}
\label{eq:maximum_critical_point_condition}
\end{equation}
If Assumption \ref{assumption:simple_admissible_maximum} holds then the converse is also true: 
If $\hat{u}$ satisfies \eqref{eq:maximum_critical_point_condition} then $\hat{u}$ is a critical point.
\label{teorem:maximum_main_critical_point}
\end{teorem}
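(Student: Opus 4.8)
The plan is to show that for every bounded $\FF$-predictable $\beta$ with $\hat u_t\pm\beta_t\in\UU$ the G\^ateaux derivative collapses to a single Hamiltonian pairing,
\[
\frac{\partial}{\partial y}J(\hat u+y\beta)\big|_{y=0} = \E\Big[\int_0^T \frac{\partial\Ham_t}{\partial u}(\hat u_t,\hat X_t)\cdot\beta_t\,dt\Big],
\]
after which both implications are variational. First I would differentiate under the expectation and the $dt$-integral in \eqref{eq:performance_functional_maximum}; the chain rule produces $\frac{\partial f_t}{\partial x}Y_t+\frac{\partial f_t}{\partial u}\cdot\beta_t$ and $g'(\hat X_T)Y_T$ with $Y=Y^{(\hat u,\beta)}$ the variation process \eqref{eq:Y_sde}, the interchange being legitimised by the uniform integrability of Assumption \ref{antagelse:integrasjon}\ref{item:uniform_integrability_maximum} together with the existence of $Y$ in \ref{item:Y_exist_maximum}. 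Since the $\frac{\partial f_t}{\partial u}\cdot\beta_t$ terms already match the first summand of $\frac{\partial\Ham_t}{\partial u}$, this reduces the theorem to the core identity
\[
\E\Big[g'(\hat X_T)Y_T + \int_0^T\tfrac{\partial f_t}{\partial x}Y_t\,dt\Big] = \E\Big[\int_0^T\Big(\hat p_t\,\tfrac{\partial b_t}{\partial u}\cdot\beta_t + \int_\Zz\hat\kappa_t(z)\,\tfrac{\partial\phi_t}{\partial u}\cdot\beta_t\,\lambda_t(dz)\Big)dt\Big].
\]

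To prove this identity I would eliminate the implicit $Y_t$'s using the first variation process \eqref{eq:G_defined}: the linear equation \eqref{eq:Y_sde} admits the variation-of-constants representation $Y_s=\int_0^s G_s(r)\,d\Gamma_r$ with $d\Gamma_r=\frac{\partial b_r}{\partial u}\cdot\beta_r\,dr+\int_\Zz\frac{\partial\phi_r}{\partial u}\cdot\beta_r\,\mu(dr,dz)$, once the quadratic co-variation between $M$ and the forcing $\Gamma$ is accounted for. Substituting and interchanging the $ds$- and $dr$-integrations by Fubini gathers the whole propagation into the single $\Gg_T$-measurable random variable $H_r := g'(\hat X_T)G_T(r)+\int_r^T\frac{\partial f_s}{\partial x}G_s(r)\,ds$, so the left side becomes $\E[\int_0^T H_r\,d\Gamma_r]$. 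Comparing with $p_r = K_r+\int_r^T F_s G_s(r)\,ds$ from \eqref{eq:p_defined}, the identification $H_r=p_r$ is precisely an integration by parts in the time variable $s$ built on the dynamics \eqref{eq:G_defined} of $G_s(r)$ (using $\frac{d}{ds}K_s=-\frac{\partial f_s}{\partial x}$ and $K_T=g'(\hat X_T)$ from \eqref{eq:K_defined}); the martingale part of $dG_s(r)$ is paired off by the duality formula, which is exactly the mechanism that forces the $\Dd_{s,z}K_s$ term into $F_s$ in \eqref{eq:F_defined}.

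It then remains to evaluate $\E[\int_0^T H_r\,d\Gamma_r]$. Its $dr$-part is immediate and, via $H_r=p_r$, yields the $\hat p_r\frac{\partial b_r}{\partial u}\cdot\beta_r$ term; the $\mu$-part, $\E[\int_0^T\int_\Zz H_r\frac{\partial\phi_r}{\partial u}\cdot\beta_r\,\mu(dr,dz)]$, is an anticipating integral because $H_r$ is $\Gg_T$-measurable, and here the duality formula \eqref{eq:Duality_formula} converts $\mu(dr,dz)$ into $\lambda_r(dz)\,dr$ and replaces $H_r$ by $\Dd_{r,z}H_r=\Dd_{r,z}p_r=\hat\kappa_r(z)$ from \eqref{eq:kappa_defined}, producing the second term exactly. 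The hard part will be that \eqref{eq:Duality_formula} is stated for a fixed $\xi\in L_2(\Omega,\Gg,\Prob)$, whereas here the pairing lives on the diagonal, the integrand $H_r$ depending on the same $r$ as the integrator; making this rigorous requires approximating along the dissecting system \eqref{eq:DdGlimit_simplefunctions_inMax}, exploiting the density of simple predictable integrands and the local-in-time measurability of $\beta$ and $\frac{\partial\phi}{\partial u}$, with the $L_2$-bounds supplied by Assumption \ref{antagelse:integrerbarhet} (in particular \eqref{eq:FG_integrable}). The same diagonal duality, used already to obtain $H_r=p_r$, is what keeps the jump bookkeeping self-consistent.

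Granting the core identity, $\frac{\partial}{\partial y}J(\hat u+y\beta)|_{y=0}=\E[\int_0^T\frac{\partial\Ham_t}{\partial u}(\hat u_t,\hat X_t)\cdot\beta_t\,dt]$. For the forward implication, since $\beta$ is $\FF$-predictable I would condition the $\Gg_T$-measurable integrand onto $\Ff_t$ by the tower property to get $\E[\int_0^T\E[\frac{\partial\Ham_t}{\partial u}\mid\Ff_t]\cdot\beta_t\,dt]=0$; testing against $\beta_t=\pm\epsilon\,e_j\,\ind_A\,\ind_{(a,b]}$ with $A\in\Ff_a$, and localising on the sets where $\hat u$ stays a fixed distance from $\partial\UU$ (admissible since $\UU$ is open and convex, so small two-sided perturbations exist), yields \eqref{eq:maximum_critical_point_condition} by the fundamental lemma of the calculus of variations. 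For the converse, Assumption \ref{assumption:simple_admissible_maximum} guarantees that the elementary controls, hence the test perturbations just used, are admissible, so the identity may be read backwards: \eqref{eq:maximum_critical_point_condition} makes every such pairing vanish and therefore $\hat u$ is a critical point.
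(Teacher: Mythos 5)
Your overall variational frame --- differentiate $J$ in the direction $\beta$, reduce to a Hamiltonian pairing, test with elementary perturbations, and keep the constraint $\UU$ under control by staying a positive distance from its boundary --- is the same as the paper's, and your endgame (forward implication and converse) would be correct \emph{granting} your core identity. The genuine gap is exactly the step you yourself call ``the hard part'', and it is not one that approximation along the dissecting system routinely fills. The duality formula \eqref{eq:Duality_formula} pairs a \emph{fixed} $\xi\in L_2(\Omega,\Gg,\Prob)$ against a predictable integrand; your argument needs a duality for integrands that are simultaneously anticipating and dependent on the integration time $r$, and it needs it twice: once to identify $H_r$ with $p_r$, and once to convert $\E\big[\int_0^T\int_\Zz H_r\,\frac{\partial\phi_r}{\partial u}\cdot\beta_r\,\mu(dr,dz)\big]$ into a $\Lambda$-integral. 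Such a statement amounts to constructing a forward/Skorohod-type anticipating integral for martingale random fields together with its duality --- a new theorem, and precisely the machinery this paper is designed to avoid. Worse, the substitution $\Dd_{r,z}H_r=\Dd_{r,z}p_r$ is unjustified: $H_r\neq p_r$ as random variables (a formal integration by parts, itself involving an anticipating integral, gives $H_r=K_r+\int_r^T K_{s-}G_{s-}(r)\,dM_s$, whereas $p_r=K_r+\int_r^T F_sG_s(r)\,ds$); at best their $\Gg_r$-conditional expectations agree, and only modulo the same unproven diagonal duality. Equality of $\Gg_r$-conditional expectations does not transfer through $\Dd$: by \eqref{eq:DdGlimit_simplefunctions_inMax} the value of $\Dd_{r,z}$ near the diagonal is built from $\E[\,\cdot\;\mu(\Delta)/\Lambda(\Delta)\mid\Gg_t]$ with $\Delta$ shrinking to $(r,z)$, i.e.\ from the conditional covariation with the noise immediately after $r$, which is not a function of the $\Gg_r$-conditional mean. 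A secondary gap: your variation-of-constants representation $Y_s=\int_0^s G_s(r)\,d\Gamma_r$ (with jump correction) requires the stochastic exponential solving \eqref{eq:G_defined} to be invertible, i.e.\ $1+\Delta M_s\neq 0$, and nothing in the paper's assumptions guarantees this.

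The paper's proof is engineered so that neither issue arises. It fixes $\beta_s=\alpha\ind_{(t,t+h]}(s)$ with $\alpha$ bounded and $\Ff_t$-measurable, so that $Y\equiv 0$ on $[0,t]$ and $Y_s=Y_{t+h}G_s(t+h)$ for $s\geq t+h$ --- a pure forward flow, with no inversion and no forcing correction --- writes criticality as $A_1(h)+A_2(h)=0$ for all $h$, and differentiates in $h$ at $h=0$. Every application of duality is then static, with a genuinely fixed $\xi$: $\xi=g'(X_T)$ in \eqref{eq:proof_duality}, $\xi=\frac{\partial f_t}{\partial x}$ for each fixed $t$ under a Fubini interchange in \eqref{eq:proof_fubini}, and $\xi=F_sG_s(t)$ for fixed $s,t$ in \eqref{eq:B1_conclusion}--\eqref{eq:B2_conclusion}; the adjoint $p_t$ of \eqref{eq:p_defined} and $\kappa_t=\Dd_{t,z}p_t$ then assemble in \eqref{eq:alpha_in_U_conclucsion} without ever having to make sense of $\int_0^T H_r\,d\Gamma_r$. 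If you want your global pairing formula, the rigorous route is to read the paper's computation as the $h$-derivative of that pairing for localized $\beta$; proving it directly for general bounded $\beta$ would require an anticipating calculus (available, e.g., in the L\'evy/Malliavin setting of the cited literature), which is exactly what this theorem is trying to dispense with.
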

For ease of notation we use the short hand notation $b_s = b_s(\hat{u}_s,\hat{X_s})$, $f_s = f_s(\hat{u}_s,\hat{X_s})$, and similarly for the other coefficients.
%
\begin{proof}
Suppose $\hat{u}$ is a critical point.
Then
\begin{align}
0 &=  \frac{\partial}{\partial y} J(\hat{u}+y\beta)  \big|_{y=0} \nonumber \\
&= \E \big[ \int\limits_0^T \frac{\partial f_s}{\partial x} Y_s +\frac{\partial f_s}{\partial u} \cdot \beta_s \ins ds + g'(X_T) Y_T \Big] . \label{eq:maximum_critical_point}
\end{align}
By the duality formula \eqref{eq:Duality_formula} (and \eqref{eq:dg_integrable})
\begin{align}
\E& \big[ g'(X_T) Y_T \Big] \nonumber \\
 =&\; \E \Big[ \int\limits_0^T g'(X_T)\big[ \frac{\partial b_s}{\partial x} Y_s +\frac{\partial b_s}{\partial u} \cdot\beta_s \big] ds \nonumber \\
&+ \int\limits_0^T \int\limits_\Zz\big[  \big( \Dd_{s,z} g'(X_T) \big) \big( \frac{\partial \phi_s}{\partial x}(z) Y_s +\frac{\partial \phi_s}{\partial u}(z) \cdot\beta_s\big)  \ins \big] \Lambda(ds,dz) \Big].\label{eq:proof_duality}
\end{align}
By the Fubini theorem and the duality formula \eqref{eq:Duality_formula} (with integrability ensured by \eqref{eq:maximum_uniform1} and the non-anticipating stochastic derivative is well defined by \eqref{dxf_integrable})
\begin{align}
\E \big[& \int\limits_0^T \frac{\partial f_t}{\partial x} Y_t  \ins dt\big] \nonumber \\
=&\;\int\limits_0^T \E \Big[  \frac{\partial f_t}{\partial x} \big[ \int\limits_0^t\frac{\partial b_s}{\partial x} Y_s +\frac{\partial b_s}{\partial u} \cdot\beta_s  \ins ds \big]  \nonumber\\
&+\frac{\partial f_t}{\partial x}  \big[\int\limits_0^t\int\limits_{\Zz}  \big( \frac{\partial \phi_s}{\partial x} Y_s +\frac{\partial \phi_s}{\partial u} \cdot\beta_s \big) \ins \mu(ds,dz) \big] \Big]   \ins dt \nonumber \displaybreak[0] \\
=&\;\E \Big[ \int\limits_0^T \Big\{ \frac{\partial f_t}{\partial x} \big[ \int\limits_0^t\frac{\partial b_s}{\partial x} Y_s +\frac{\partial b_s}{\partial u} \cdot\beta_s  \ins ds \big]  \nonumber\\
&+\big[\int\limits_0^t\int\limits_{\Zz} \big( \Dd_{s,z}\frac{\partial f_t}{\partial x} \big) \big( \frac{\partial \phi_s}{\partial x} Y_s +\frac{\partial \phi_s}{\partial u} \cdot\beta_s \big) \ins \Lambda(ds,dz) \big] \Big\}\ins dt \Big] \nonumber \displaybreak[0]  \\
=&\;\E \Big[ \int\limits_0^T \big[ \int\limits_t^T \frac{\partial f_s}{\partial x} \ins ds  \big] \big( \frac{\partial b_t}{\partial x} Y_t +\frac{\partial b_t}{\partial u} \cdot\beta_t \big)  \ins dt \nonumber \\
&+\int\limits_0^T \int\limits_\Zz \big[ \int\limits_t^T  \Dd_{t,z} \frac{\partial f_s}{\partial x} \ins ds \big] \big( \frac{\partial \phi_t}{\partial x} Y_t +\frac{\partial \phi_t}{\partial u} \cdot\beta_t \big) \ins \Lambda(dt,dz) \Big]. \label{eq:proof_fubini}
\end{align}
By the continuity of $\Dd$ \cite[Remark 3.4]{DiNunno2010} and with sufficent integrability from \eqref{dxf_integrable} we have
\begin{equation}
\int\limits_t^T  \Dd_{t,z} \frac{\partial f_s}{\partial x} \ins ds = \Dd_{t,z} \int\limits_t^T \frac{\partial f_s}{\partial x} \ins ds, \quad d\Lambda\times d\Prob \text{ a.e.}
\label{eq:interchange_stochastic_integration}
\end{equation}
We recall \eqref{eq:K_defined}, \eqref{eq:DK_defined}, and by \eqref{eq:maximum_critical_point}-\eqref{eq:proof_duality}-\eqref{eq:proof_fubini} conclude that
\begin{align}
\E &\Big[ \int\limits_0^T K_s \big( \frac{\partial b_s}{\partial x} Y_s +\frac{\partial b_s}{\partial u} \cdot\beta_s \big) +\frac{\partial f_s}{\partial u} \cdot\beta_s \ins  ds \nonumber \\
&+ \int\limits_0^T \int\limits_\Zz (\Dd_{s,z}K_s) \big( \frac{\partial \phi_s}{\partial x} Y_s +\frac{\partial \phi_s}{\partial u} \cdot\beta_s \big) \ins\Lambda(ds,dz) \Big] = 0.
\label{eq:proof5}
\end{align}
Let $\alpha = (0,\dots, \alpha^{(j)}, \dots 0)$, be a random variable in $\RR^n$ which is zero except at the index $j$, where $1\leq j \leq n$. Set 
\begin{equation*}
 \beta_s= \alpha \ind_{(t,t+h]}(s) = (0,\dots, \alpha^{(j)}, \dots 0) \ind_{(t,t+h]}(s)
\end{equation*}
We assume $\alpha^{(j)}$ is bounded, $\Ff_t$-measurable and such that, $u_t\pm \beta_t$ takes values in $\UU$ $dt\times d\Prob$ a.e. Then $Y_s=Y^{(u,\beta)}_s= 0$ for $s<t$ so that \eqref{eq:proof5} can be rewritten as
\begin{equation}
A_1+A_2 = 0 
\label{eq:A1plusA2}
\end{equation}
where 
\begin{align*}
A_1 =&\; \E \Big[ \int\limits_t^T K_s\frac{\partial b_s}{\partial x} Y_s  \ins  ds+ \int\limits_t^T \int\limits_\Zz (\Dd_{s,z}K_s) \frac{\partial \phi_s}{\partial x} Y_s  \ins\Lambda(ds,dz) \Big], \\
A_2 =&\; \E\Big[\alpha\cdot \Big( \int\limits_t^{t+h} \big[ K_s\frac{\partial b_s}{\partial u} + \frac{\partial f_s}{\partial u} \big]\ins  ds+ \int\limits_t^{t+h} \int\limits_\Zz (\Dd_{s,z}K_s) \frac{\partial \phi_s}{\partial u} \ins \Lambda(ds,dz) \Big) \Big].
\end{align*}
From \eqref{eq:F_defined}
\begin{align*}
A_1 =&\;\E \Big[ \int\limits_t^T F_s Y_s \ins ds \Big] \\
=&\;\int\limits_{t}^{t+h} \E \Big[   F_s Y_s  \Big] ds +  \int\limits_{t+h}^{T} \E \Big[  F_s Y_s \Big]  \ins ds.
\end{align*}
Since $Y$ admits a c\`adl\`ag representative and $Y_t = 0$ we have
%
\begin{equation*}
\frac{\partial}{\partial h} \int\limits_{t}^{t+h} \E \Big[   F_s Y_s  \Big] ds \big|_{h=0} = 0.
\end{equation*}
Recall \eqref{eq:Y_sde} and \eqref{eq:G_defined}. We have 
\begin{equation*}
Y_s = Y_{t+h} G_s(t+h) \quad \text{for } s\geq t+h.
\end{equation*}
Since $Y_t = 0$ (interchange of integration and expectation justified by \eqref{eq:maximum_uniform1}, \eqref{eq:maximum_uniform2})
\begin{align*}
\frac{\partial}{\partial h} A_1 \big|_{h=0} &=  \frac{\partial}{\partial h} \int\limits_{t+h}^{T} \E \Big[  F_s Y_s \Big]  \ins ds \;\big|_{h=0} \displaybreak[0] \\
&= \int\limits_t^T  \frac{\partial}{\partial h} \Big\{ \E \Big[ F_s Y_{t+h} G_s(t+h) \Big]  \Big\} \ins ds \Big|_{h=0} -  F_t Y_t \displaybreak[0]  \\
&= \int\limits_t^T   \E \Big[ F_s \Big\{  Y_{t+h} \frac{\partial}{\partial h} G_s(t+h) + G_s(t+h) \frac{\partial}{\partial h}Y_{t+h}    \Big\}  \Big]  \ins ds \Big|_{h=0} \displaybreak[0] \\
&= \int\limits_t^T  \frac{\partial}{\partial h} \E \Big[  F_s  G_s(t) Y_{t+h}  \Big] \Big|_{h=0}  \ins ds.
\end{align*}
By \eqref{eq:Y_sde} we have
\begin{align*}
Y_{t+h} =&\; \alpha\cdot \Big( \int\limits_t^{t+h}\frac{\partial b_s}{\partial u} \ins ds +  \int\limits_t^{t+h} \int\limits_\Zz \frac{\partial \phi_s}{\partial u} \ins \mu(ds,dz) \Big) \\
&+ \int\limits_t^{t+h} Y_s\frac{\partial b_s}{\partial x} \ins ds +  \int\limits_t^{t+h} \int\limits_\Zz  Y_{s\minus} \frac{\partial \phi_s}{\partial x} \ins \mu(ds,dz) .
\end{align*}
Denote $\frac{\partial}{\partial h} A_1 |_{h=0} = B_1 + B_2$ with
\begin{align*}
B_1 &=  \int\limits_t^T  \frac{\partial}{\partial h} \E \Big[  F_s  G_s(t) \Big\{ \alpha\cdot \Big( \int\limits_t^{t+h} \frac{\partial b_r}{\partial u} \ins dr +  \int\limits_t^{t+h} \int\limits_\Zz \frac{\partial \phi_r}{\partial u} \ins \mu(dr,dz) \Big) \Big\}   \Big] \Big|_{h=0}  \ins ds , \displaybreak[0]  \\
B_2 &= \E \Big[ \int\limits_t^{T} \frac{\partial}{\partial h} \E \Big[  F_s  G_s(t) \Big\{  \int\limits_t^{t+h} Y_r \frac{\partial b_r}{\partial x} \ins dr +  \int\limits_t^{t+h} \int\limits_\Zz  Y_{r\minus} \frac{\partial \phi_r}{\partial x} \ins \mu(dr,dz) \Big\}   \Big] \Big|_{h=0}  \ins ds.
\end{align*}
By the duality formula \eqref{eq:Duality_formula} (well defined by \eqref{eq:FG_integrable})
\begin{align}
B_1 =&\; \int\limits_t^T  \frac{\partial}{\partial h} \E \Big[  F_s  G_s(t) \Big\{ \alpha\cdot \Big( \int\limits_t^{t+h} \frac{\partial b_r}{\partial u} \ins dr +  \int\limits_t^{t+h} \int\limits_\Zz \frac{\partial \phi_r}{\partial u} \ins \mu(dr,dz) \Big) \Big\}   \Big] \Big|_{h=0}  \ins ds \displaybreak[0] \nonumber \\
=&\; \int\limits_t^T  \frac{\partial}{\partial h} \E \Big[  \Big\{ \alpha\cdot \Big( \int\limits_t^{t+h}  F_s  G_s(t) \frac{\partial b_r}{\partial u}  \ins dr 
\nonumber \\
&+\int\limits_t^{t+h} \int\limits_\Zz \Dd_{r,z}\big( F_s  G_s(t) \big) \frac{\partial \phi_r}{\partial u} \ins \lambda_r(dz) \ins dr \Big)  \Big\}   \Big] \Big|_{h=0}  \ins ds \displaybreak[0]  \nonumber \\
=&\; \int\limits_t^T  \E \Big[  \Big\{ \alpha\cdot \Big( F_s  G_s(t) \frac{\partial b_t}{\partial u}  + \int\limits_\Zz \Dd_{t,z}\big( F_s  G_s(t) \big) \frac{\partial \phi_t}{\partial u} \ins \lambda_t(dz)  \Big) \Big\}   \Big]  \ins ds .\label{eq:B1_conclusion}
\end{align}

By the duality formula \eqref{eq:Duality_formula} (well defined by \eqref{eq:FG_integrable}) and since $Y_t = 0$ we have
\begin{align}
B_2 =&\;  \int\limits_t^{T} \frac{\partial}{\partial h}  \E \Big[ F_s  G_s(t)  \Big\{\int\limits_t^{t+h}  Y_r \frac{\partial b_r}{\partial x} \ins dr +  \int\limits_t^{t+h} \int\limits_\Zz  Y_{r\minus} \frac{\partial \phi_r}{\partial x} \ins \mu(dr,dz) \Big\}   \Big] \Big|_{h=0}  \ins ds \nonumber \displaybreak[0] \\
=&\;  \int\limits_t^T   \E \Big[\frac{\partial}{\partial h}   \Big\{ \int\limits_t^{t+h} F_s  G_s(t) Y_r \frac{\partial b_r}{\partial x} \ins dr \nonumber\\ 
&+  \int\limits_t^{t+h} \int\limits_\Zz  \Dd_{r,z}\big( F_s  G_s(t) \big)  Y_{r\minus} \frac{\partial \phi_r}{\partial x} \ins  \lambda_r(dz)\ins dr \Big\}   \Big] \Big|_{h=0}  \ins ds \nonumber \displaybreak[0] \\
=&\;  \int\limits_t^T  \E \Big[  \Big\{  F_s  G_s(t) Y_t \frac{\partial b_t}{\partial x}  +  \int\limits_\Zz  \Dd_{r,z}\big( F_s  G_s(t) \big)  Y_{t} \frac{\partial \phi_t}{\partial x} \ins  \lambda_t(dz)\Big\}   \Big]  \ins ds \nonumber \displaybreak[0] \\
=&\; 0. \label{eq:B2_conclusion}
\end{align}
\bigskip
We see immediately that (interchange of derivation and expectation justified by \eqref{eq:maximum_uniform1} \eqref{eq:maximum_uniform2})
\begin{align}
\frac{\partial}{\partial h} A_2 \Big|_{h=0} =&\; \E\Big[ \alpha\cdot \Big( K_t \frac{\partial b_t}{\partial u} + \frac{\partial f_t}{\partial u} + \int\limits_\Zz (\Dd_{t,z}K_t) \frac{\partial \phi_t}{\partial u} \ins \lambda_t(dz) \Big) \Big].
\label{eq:A2conclusion}
\end{align}
Recall that $\frac{\partial}{\partial h} A_1 = B_1+B_2$ and the definition of $p$ in \eqref{eq:p_defined}. By \eqref{eq:B1_conclusion}-\eqref{eq:B2_conclusion}-\eqref{eq:A2conclusion} we have
\begin{align}
\frac{\partial}{\partial h} \{ A_1+A_2 \}_{h=0} &= \E \Big[  \alpha\cdot \Big\{ \frac{\partial f_t}{\partial u} +  p_t \frac{\partial b_t}{\partial u} + \int\limits_\Zz \big( \Dd_{t,z} p_t \big) \frac{\partial \phi_t}{\partial u}(z) \ins \lambda_t(dz) \Big\} \Big] \nonumber \\
&= \E \Big[ \alpha\cdot \frac{\partial \Ham_t}{\partial u}(\hat{X}_t,\hat{u}_t) \Big]. \label{eq:alpha_in_U_conclucsion} 
\end{align}
As a function of $h$, $A_1(h)+A_2(h) = 0$ for all $0\leq h\leq T-t$ by \eqref{eq:A1plusA2}. Hence $\frac{\partial}{\partial h}\{ A_1(h)+A_2(h)\} = 0$ and thus 
\begin{equation*}
0=  \E \Big[ \alpha\cdot \frac{\partial \Ham_t}{\partial u}(\hat{X}_t,\hat{u}_t) \Big] = \E \Big[ \alpha^{(j)} \frac{\partial \Ham_t}{\partial u^{(j)} }(\hat{X}_t,\hat{u}_t) \Big].  
\end{equation*}

Recall that here $u_t+\alpha$ is a $\Ff_t$-measurable random variable taking values in $\UU$ a.s. Define
\begin{equation*}
D(\omega) = \sup_{c\in \RR} \{ u_t(\omega) + c \in \UU \text{ and }  u_t(\omega) - c \in \UU \}  \wedge 1. 
\end{equation*}
Here $D$ ``measures the minimum distance'' between $u^{(j)}$ and $\UU$ $\omega$-wise. Note that $0< D \leq 1$ a.s. Let $\zeta$ be a $\Ff_t$-measurable random variable bounded by $C>0$. Then 
\begin{equation*}
u_t + \frac{1}{2C} \zeta D\in \UU, \quad \text{a.s.}
\end{equation*}
We take $\alpha^{(j)} = \frac{1}{2C} \zeta \delta$ and from \eqref{eq:alpha_in_U_conclucsion} get
\begin{equation*}
\E\Big[ \frac{1}{2C} \zeta D \frac{\partial \Ham_t}{\partial u^{(j)}}(\hat{X}_t,\hat{u}_t) \Big] =0.
\end{equation*}
We multiply by $2C$ to find 
\begin{equation*}
\E\Big[ \zeta D  \frac{\partial \Ham_t}{\partial u^{(j)}}(\hat{X}_t,\hat{u}_t) \Big] =0.
\end{equation*}
Let $\zeta^{(m)} = \big( \frac{1}{\delta} \zeta  \big)  \wedge m$. Then $\zeta^{(m)} \to \zeta$ when $m\to\infty$ a.s. and we must have
\begin{equation*}
\E\Big[ \zeta \frac{\partial \Ham_t}{\partial u^{(j)}}(\hat{X}_t,\hat{u}_t) \Big] =0.
\end{equation*}
Since this holds for all $\Ff_t$ measurable $\zeta$ we conclude
\begin{equation}
\E\Big[  \frac{\partial \Ham_t}{\partial u^{(j)}}(\hat{X}_t,\hat{u}_t) \Big| \Ff_t \Big] = 0.
\label{eq:proof15}
\end{equation}
The proof for the sufficient condition is complete as \eqref{eq:proof15} holds for all $1\leq j \leq n$. 
%
%

\medskip
Conversely, suppose \eqref{eq:maximum_critical_point_condition}. By reversing the above argument we get that \eqref{eq:A1plusA2} holds for all $\beta \in \Af$ of the form 
\begin{equation*}
\beta(s,\omega) = \alpha(\omega) \ind_{(t,t+h]}(s), 
\end{equation*}
where the random variable $\alpha$ is $\Ff_t$-measurable, bounded and such that $u\pm\beta$, takes values in $\UU$ $dt\times d\Prob$ a.e. Here $0\leq t < t+h \leq T$. Hence \eqref{eq:A1plusA2} holds for all linear combinations of such $\beta$. Since any $\beta \in \Af$ can be approximated by such linear combinations it follows that \eqref{eq:A1plusA2} holds for all bounded $\beta \in \Af$.
\end{proof}

\section{A remark on the technique used}
\label{A_remark_on_the_technique}

In this paper, the maximum principle relies on on evaluating 
\begin{equation}
\frac{d}{dy} J(u+y\beta) 
\label{eq:technique}
\end{equation}
where $J$ is the performance functional \eqref{eq:performance_functional_maximum_intro}. Here $u$ is the control which is a ``candidate'' to be an optimal solution, $y\in \RR$ and $\beta$ is a pertubation of $u$. In this Section we discuss a technical point in how this technique has been presented in the literature, because some frequently used conditions have implications on how we can choose $\UU$ (the space where the controls are taking their values). In several papers, e.g. \cite{An2008a,An2010,baghery2007,Brandis2012,Haadem2013,Mataramvura2008,Meng2009,Pamen2009}, that evaluate \eqref{eq:technique} (for performance functionals of type \eqref{eq:performance_functional_maximum}, but with different assumptions on the noises) the following four assumptions are standard:
\begin{enumerate}
\item The admissible controls $u$ take values in an open, convex set $U \subseteq \RR^n$. 
\label{item:the_set}
\item The admissible controls satisfy some integrability conditions related to the problem and the corresponding state-process (given by a SDE) has a unique strong solution.
\label{item:other_conditions}
\item For all bounded and $\tilde{\Ff}_t$-measurable random variables $\alpha$, the control 
\begin{equation*}
u_s(\omega) = \alpha(\omega) \ind_{(t,t+r]}(s), \quad 0 \leq t < t+r \leq T,
\end{equation*}
is admissible\footnote{In \cite{baghery2007,Meng2009} it is only assumed that $\alpha$ takes values in $U$.}. Here $\tilde{\FF} = \{\tilde{\Ff}_t, \; t\in [0,T]\}$ is a filtration relevant to the optimization problem. 
\label{item:all_bounded}
\item If $u$ and $\beta$ are admissible controls, with $\beta$ bounded, there exist $\delta>0$ such that $u+y\beta$ is also an admissible control for all $|y| < \delta$.
\label{item:spaceconvexity}
\end{enumerate}
For convenience we only discuss the case when $n=1$ in Condition \ref{item:the_set}. However the issue presented here can easily be generalized to any $n>1$.

Condition \ref{item:all_bounded} implies that all the constants are elements of $U$, since $C \ind_{(t,t+r]}(s)$, $C\in \RR$ must be an admissible control. This can only be satisfied if $U=\RR$. Meaning that $U$ cannot be taken to be any open, convex set as described in \ref{item:the_set}, but it is necessary that $U = \RR$ for the maximum principle to apply.

We could attempt to change Condition \ref{item:all_bounded} to 
\begin{enumerate}
\renewcommand{\theenumi}{\roman{enumi}')}
\setcounter{enumi}{2}
\item For all bounded and $\tilde{\Ff}_t$-measurable random variables $\alpha$ such that $\alpha \in U$ a.s., the control 
\begin{equation*}
u_s(\omega) = \alpha(\omega) \ind_{(t,t+r]}(s)\quad 0 \leq t < t+r \leq T,
\end{equation*}
is admissible.
\label{item:alternate_bound}
\end{enumerate}
However, Condition \ref{item:spaceconvexity} is still a problem. To explain, suppose $U = (c_1,c_2)$ for some $c_1<c_2$ and let $\alpha$ be a bounded, $\tilde{\Ff}_t$-measurable random variable taking values in $U$. If condition \ref{item:alternate_bound} holds, both $u_s(\omega) := \alpha(\omega) \ind_{(t,t+r]}(s)$ and $\beta_s(\omega) := C \ind_{(t,t+r]}(s)$, $C\in (c_1,c_2)$, are admissible controls. Even if the random variable $\alpha$ satisfies $\alpha < c_2$ a.s. we can have
\begin{equation*}
\esssup \alpha = c_2
\end{equation*}
and thus $u_t+y \beta_t \in U$ a.s. is \emph{not} possible for any $y>0$. Hence $u_t+y \beta_t$ is not an admissible control for any $y>0$, as it is not taking values in $U$, and Condition \ref{item:spaceconvexity} fails.

The use of the ``standard'' assumptions \ref{item:the_set}-\ref{item:other_conditions}-\ref{item:all_bounded}-\ref{item:spaceconvexity} is not a major issue, the resulting maximum principle will hold for $U=\RR$. Indeed the technical conditions are correct even if opaque. Moreover, if one is only interested in bounded controls one can apply the maximum principle and then check whether the resulting control is in fact bounded. There will however be a problem, at least formally, if integrability conditions or other conditions (\ie \ref{item:other_conditions}) on the admissible controls require them to take values in a bounded set. Also, the study of the control problem with $U$ bounded has independent interest. As an example, in the forthcoming Proposition \ref{proposisjon:maximum_sufficiently_optimal} we show additional results on the uniquess of the solution when $U$ is bounded. Hence we used Assumption \ref{antagelse:integrasjon} in the maximum principle, Theorem \ref{teorem:maximum_main_critical_point}.

\section{Application to default risk}
\label{section:application_default_risk}

Here we show an application of the maximum principle to portfolio optimization. We choose a setting outside L\'evy processes that has independent interest: Assets with credit risk modeled by doubly stochastic Poisson processes. Credit risk with doubly stochastic Poisson processes has been widely studied in the literature, see e.g. \cite{Jarrow2001,Lando1998,Duffie2005}.

%

Let $\lambda_s=(\lambda_s^{(1)}, \dots \lambda_s^{(n)} )$, $s\in [0,T]$, be a positive, stochastic process in $\RR^n$. Let $\Lambda_t^{(j)} = \int_0^t \lambda_s^{(j)} \ins ds$, and denote the filtration generated by $\lambda$ as $\FF^\Lambda = \{ \Ff_t^{\Lambda},\; t\in [0,T]\}$. No assumptions of independence are required between $\Lambda^{(j)}$ and $\Lambda^{(k)}$ for any $j\neq k$.

The $n$-dimensional pure jump process $H_s= (H_s^{(1)}, \dots,H_s^{(n)})$ is a doubly stochastic Poisson process if, when conditioned on the $\lambda$'s, it is Poisson distributed. We assume that
\begin{equation*}
\Prob\big( H_t^{(j)} = k \ins\big| \Ff_T^\Lambda \big)  = \Prob\big( H_t^{(j)} = k  \ins\big| \Lambda^{(k)}_t \big)  = \frac{ (\Lambda_t^{(j)})^k }{k!} e^{-\Lambda_t^{(j)}} 
\end{equation*}
for all $1\leq j \leq n$ and $k\in \mathbb{N}$. Let $\Ht_t := H_t - \Lambda_t$, $t\in[0,T]$ and $\FF:= \{\Ff_t, t\in [0,T]\}$ be the filtration generated by $\Ht$. Let $\GG=\FF$ and $\Zz = \{1,\dots n\}$, where $\Zz$ is equipped with the discrete topology. Note that $\Ff_t^\Lambda \subset \Ff_t$ for all $t \in [0,T]$ by \cite[Theorem 2.8]{Chaos}. Then $\mu$ defined by $\mu(dt,z) = d\Ht_t^{(z)}$ is a martingale random field with respect to $\FF=\GG$ on $[0,T]\times \Zz$. 

Note that the non-anticipating stochastic derivative for doubly stochastic Poisson processes has been studied in \cite{Chaos}. Computational rules of Malliavin type can also be found in \cite{Yablonski2007}. 

Let $\tau^{(z)}$ be the first jump of $H^{(z)}$, $z=1,\dots n$.
We model each asset $S^{(z)}$ with a return $\rho^{(z)}+\lambda^{(z)}$ up to the time of default $\tau$. In the case of default the asset $S^{(z)}$ become worthless, \ie $S^{(z)}_{\tau^{(z)}} = 0$ (whenever $\tau^{(z)}<T$). The goal of the investor is to invest in the $n$ assets maximizing expected utility of the wealth at terminal time $T$. In mathematical terms: Let
\begin{align*}
S_t^{(1)} &= S_{t-}^{(1)} \ind_{\{\tau^{(1)}>t\}}(t) \big( \rho_t^{(1)} \ins dt - \ins d\Ht_t^{(1)} \big), \\
&\vdots \\
S_t^{(n)} &= S_{t-}^{(n)} \ind_{\{\tau^{(n)}>t\}}(t) \big( \rho_t^{(n)} \ins dt - \ins d\Ht_t^{(n)} \big) .
\end{align*}
Let $X$ denote the total wealth of the investor and the control $u$ denote the amount invested in the $n$ assets:
\begin{align*}
X_t =&\; \int\limits_0^t \sum_{z=1}^n\ind_{\{\tau^{(z)} >r\}}(r) u_r^{(z)} \rho_r^{(z)} \ins dr -  \int\limits_0^t \sum_{z=1}^n \ind_{\{\tau^{(z)}>r\}}(r) u_r^{(z)}  \ins  d \Ht_t^{(j)} \\    
\end{align*}
Remark that every asset $S^{(z)}$ and the wealth process $X$ are $\FF$-adapted. 
With 
\begin{equation}
J(u) = \E \Big[ U(X_T) \Big] 
\label{eq:default_optimization}
\end{equation}
where $U: \RR\to \RR$ is an utility function (differentiable, increasing and strictly concave), we look for
\begin{equation*}
\sup_{u\in \Af} J(u).
\label{eq:default_optimization_goal}
\end{equation*}
We have
\begin{align*}
K_t &= U'(X_T), \\ 
F_t &= U'(X_T) \sum_{z=1}^n\ind_{\{\tau^{(z)} >t\}}(t) u_t^{(z)} \rho_t^{(z)} + \sum_{z=1}^n \big( \Dd_{t,z} U'(X_t) \big) \ind_{\{\tau^{(z)}>t\}}(t) u_t^{(z)} \lambda_t^{(z)}, \\
p_t &= U'(X_T), \\
\kappa_t &= \sum_{z=1}^n \Dd_{t,z} U'(X_t) ,\\
G_s(t&) = 0.
\end{align*}
Remark that under these assumptions, any $\FF$-predictable process $u$ is an admissible control if
\begin{equation}
 \E\big[ U(X^u_T)^2 + U'(X^u_T)^2 \big] <\infty.
\label{eq:easy_admissible}
\end{equation}
Furthermore Assumption \ref{antagelse:integrasjon} only depends on verifying \eqref{eq:easy_admissible} for $u+y\beta$. The Hamiltonian \eqref{eq:Hamiltionian} is given by
\begin{align*}
\Ham_t(u,x) =&\; U'(X_T) \sum_{z=1}^n\ind_{\{\tau^{(z)} >t\}}(t)  u_t^{(z)} \rho_t^{(z)} \\
&+ \sum_{z=1}^n \big( \Dd_{t,z} U'(X_t) \big) \ind_{\{\tau^{(z)}>t\}}(t) u_t^{(z)} \lambda_t^{(z)}. 
\end{align*}
Hence
\begin{equation*}
\frac{\partial \Ham_t}{\partial u}(v,x) = U'(X_T) \sum_{z=1}^n\ind_{\{\tau^{(z)} >t\}}(t) \rho_t^{(z)} + \sum_{z=1}^n \big( \Dd_{t,z} U'(X_T) \big) \ind_{\{\tau^{(z)}>t\}}(t)  \lambda_t^{(z)}.
\end{equation*}

Theorem \ref{teorem:maximum_main_critical_point} finds critical points for \eqref{eq:default_optimization}. To ensure that a critical point $\hat{u}$ is a solution to \eqref{eq:default_optimization_goal} we need to know that 1) the critical point is a local maximum and 2) there are no other critical points $\bar{u}$ where $J(\bar{u})>J(\hat{u})$. We investigate the exact properties of the critical points in Proposition \ref{proposisjon:maximum_sufficiently_optimal} and sufficent conditions for a solution to \eqref{eq:default_optimization_goal} are given in Corollary \ref{korollar:supremum_exists}.

\begin{proposisjon}
Assume that
\begin{enumerate}
\item $U$ is twice continuously differentiable and concave,
\item All bounded $\FF$-predictable processes taking values in $\UU$ are admissible controls,
\label{item:maximum_convexity_bounded}
\item For any $u \in \Af$ and $\FF$-predictable bounded process $\beta$ such that 
\begin{equation}
u_t \pm \beta_t \in \UU, 
\quad dt\times d\Prob \text{ a.e.} 
\label{eq:plusminusinU}
\end{equation}
then there exist $\varepsilon>0$ such that 
\begin{align}
\Big \{ U''\big(X^{u+y\beta}_T \big) \Big( &\int\limits_0^t \sum_{z=1}^n\ind_{\{\tau^{(z)} >r\}}(r) \beta^{(z)} \rho_r^{(z)} \ins dr \nonumber \\
&- \sum_{z=1}^n \int\limits_0^t  \ind_{\{\tau^{(z)}>r\}}(r) \beta^{(z)}  \ins  d \Ht_t^{(z)} \Big)^2 \Big\}_{y\in
(-\varepsilon,\varepsilon)} \label{eq:maximum_second_uniform}
\end{align}
is uniformly $\Prob$-integrable,
\item Assumption \ref{antagelse:integrasjon} holds for all bounded $u \in \Af$.
\end{enumerate}
Let $\epsilon = min(\delta, \varepsilon)$, where $\delta$ is as in \eqref{eq:maximum_uniform2}. 
Then the mapping $y \to J(u+y\beta)$, $y\in(-\epsilon,\epsilon)$, is strictly concave for all $u\in\Af$ and bounded $\FF$-predictable $\beta$ satisfying \eqref{eq:plusminusinU}. Furthermore, there is at most one bounded $u \in \Af$ such that $u$ is a critical point (in the sense of Theorem \ref{teorem:maximum_main_critical_point}).
\label{proposisjon:maximum_sufficiently_optimal}
\end{proposisjon}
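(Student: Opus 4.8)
The plan is to exploit the special structure of the default-risk model. Because the coefficients $b$ and $\phi$ of the wealth equation do not depend on $x$ and are \emph{linear} in the control, the map $u\mapsto X_T^{(u)}$ is affine: for admissible $u$ and bounded $\FF$-predictable $\beta$ the terminal wealth satisfies $X_T^{u+y\beta}=X_T^{(u)}+y\,Y_T$, where
\[
Y_T=\int\limits_0^T \sum_{z=1}^n \ind_{\{\tau^{(z)}>r\}}(r)\beta_r^{(z)}\rho_r^{(z)}\ins dr-\int\limits_0^T\sum_{z=1}^n\ind_{\{\tau^{(z)}>r\}}(r)\beta_r^{(z)}\ins d\Ht_r^{(z)}
\]
is exactly the process inside \eqref{eq:maximum_second_uniform} (it solves \eqref{eq:Y_sde} for the present coefficients) and depends on $\beta$ but not on $u$. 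Thus $J(u+y\beta)=\E\big[U(X_T^{(u)}+y\,Y_T)\big]$, and the whole question reduces to a one-dimensional concavity statement for this expectation.

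Strict concavity of $y\mapsto J(u+y\beta)$ on $(-\epsilon,\epsilon)$ then follows by differentiating twice under the expectation. The first derivative is $\E[U'(X_T^{(u)}+y\,Y_T)\,Y_T]$, the interchange being legitimate by the uniform integrability in \eqref{eq:maximum_uniform2} (with $g=U$); differentiating once more gives
\[
\frac{d^2}{dy^2}J(u+y\beta)=\E\big[U''(X_T^{(u)}+y\,Y_T)\,Y_T^2\big],
\]
where the second interchange is justified precisely by assumption \eqref{eq:maximum_second_uniform}. Since $U$ is strictly concave, $U''<0$ everywhere, so this is $\le 0$, and strictly negative as soon as $\Prob(Y_T\neq 0)>0$. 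Non-degeneracy of $Y_T$ for a non-trivial $\beta$ follows from the It\^o isometry \eqref{eq:ito_isometry} applied to the martingale part of $Y_T$, whose squared $L_2$-norm equals $\E[\int_0^T\sum_z(\ind_{\{\tau^{(z)}>r\}}\beta_r^{(z)})^2\lambda_r^{(z)}\ins dr]$, positive whenever $\beta$ acts before default. This yields the asserted strict concavity.

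For uniqueness I would argue by contradiction. Suppose $\hat u,\bar u\in\Af$ are two bounded critical points and set $\Phi(y):=J\big((1-y)\hat u+y\bar u\big)$ for $y\in[0,1]$. By convexity of $\UU$ the control $(1-y)\hat u+y\bar u$ is $\FF$-predictable, bounded and $\UU$-valued, hence admissible by hypothesis \ref{item:maximum_convexity_bounded}, and the computation above shows $\Phi$ is strictly concave (take $\beta=\bar u-\hat u$, so the response equals $X_T^{(\bar u)}-X_T^{(\hat u)}$). It remains to see $\Phi'(0)=\Phi'(1)=0$. By the directional-derivative identity established in the proof of Theorem \ref{teorem:maximum_main_critical_point} (see \eqref{eq:maximum_critical_point}--\eqref{eq:alpha_in_U_conclucsion}), $\Phi'(0)=\E[\int_0^T(\bar u_t-\hat u_t)\cdot\frac{\partial\Ham_t}{\partial u}(\hat u_t,\hat{X}_t)\ins dt]$; since $\bar u_t-\hat u_t$ is $\Ff_t$-measurable, conditioning on $\Ff_t$ and invoking the critical-point condition \eqref{eq:maximum_critical_point_condition} for $\hat u$ gives $\Phi'(0)=0$, and the symmetric argument at $y=1$ (with the Hamiltonian of $\bar u$) gives $\Phi'(1)=0$. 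A strictly concave $C^1$ function on $[0,1]$ has a strictly decreasing derivative and cannot vanish at both endpoints; the contradiction forces $X_T^{(\hat u)}=X_T^{(\bar u)}$, i.e. uniqueness of the critical point.

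The main technical obstacle is the double differentiation under the expectation, which is not automatic and is exactly why \eqref{eq:maximum_second_uniform} is imposed alongside \eqref{eq:maximum_uniform2}. A secondary point is that the connecting direction $\beta=\bar u-\hat u$ need not satisfy the symmetric constraint $\hat u\pm\beta\in\UU$ of \eqref{eq:plusminusinU}; this is handled by working only on the segment $[0,1]$, where convexity keeps the control in $\UU$, and by noting that \eqref{eq:maximum_critical_point_condition} yields a vanishing directional derivative in \emph{every} $\FF$-predictable direction, not merely the symmetric ones. Finally, one should keep in mind the degenerate case $\Prob(Y_T\neq 0)=0$ (controls differing only after their respective default times, which leave the wealth unchanged): there strict concavity degenerates, but the two critical points then produce identical terminal wealth, so uniqueness persists in the only sense relevant to \eqref{eq:default_optimization}.
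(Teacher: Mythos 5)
Your proposal is correct and follows essentially the same route as the paper's proof: strict concavity of $y\mapsto J(u+y\beta)$ obtained by differentiating twice under the expectation (justified by \eqref{eq:maximum_uniform2} and \eqref{eq:maximum_second_uniform}), and then uniqueness by restricting $J$ to the segment joining two putative bounded critical points and observing that a strictly concave function cannot have vanishing derivative at both ends. The only structural difference is bookkeeping: the paper centers the segment at the midpoint $\hat{u}+\frac{1}{2}\beta$ so that the perturbation $\pm\frac{1}{2}\beta$ is symmetric in the sense of \eqref{eq:plusminusinU} and lands exactly on $\hat{u}$ and $\bar{u}$ (with the concavity interval extended to $\epsilon>1$), whereas you work on $[0,1]$ with one-sided endpoint derivatives; your explicit justification of $\Phi'(0)=\Phi'(1)=0$ via conditioning on $\Ff_t$, and your flagging of the degenerate case $\Prob(Y_T\neq 0)=0$ (where the paper's asserted strict inequality $<0$ in fact fails), are refinements of points the paper's terser proof glosses over.
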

%
\begin{proof}
First we prove the concavity of the mapping $y \to J(u+y\beta)$, $y\in (-\epsilon,\epsilon)$. We interchange the derivation and expectation and get
\begin{align*}
\frac{\partial^2}{\partial y^2} J(u+y\beta) =&\;  \E \Big[ \frac{\partial^2}{\partial y^2} U(X^{u+y\beta}_T) \Big] \\
=&\; \E \Big[  U''\big(X^{u+y\beta_T}\big) \Big( \int\limits_0^t \sum_{z=1}^n\ind_{\{\tau^{(z)} >r\}}(r) \beta^{(z)} \rho_r^{(z)} \ins dr \\
&- \sum_{z=1}^n \int\limits_0^t  \ind_{\{\tau^{(z)}>r\}}(r) \beta^{(z)}  \ins  d \Ht_t^{(z)} \Big)^2 \Big] < 0,
\end{align*}
where the last inequality follows by the concavity of $U$. 
\smallskip \noindent

Next we want to show that there is at most one bounded $u \in \Af$ such that $u$ is a critical point. 
First we show that when $u \in \Af$ is bounded and $\beta$ is as in \eqref{eq:plusminusinU}, we have $\epsilon>1$, \ie that $y \to J(u+y\beta)$ is a strictly concave mapping for $y\in(-\epsilon,\epsilon)$ with $\epsilon>1$. 
The claim $\epsilon>1$ follows from \ref{item:maximum_convexity_bounded} and the uniform integrability conditions \eqref{eq:maximum_uniform1}-\eqref{eq:maximum_uniform2}-\eqref{eq:maximum_second_uniform} since $\frac{\partial}{\partial y} J(u+y\beta)|_{y=a} = \frac{\partial}{\partial y} J(u+a\beta+y\beta) |_{y=0}$. 

\smallskip\noindent
Suppose $\bar{u},\hat{u} \in \Af$ are both bounded and critical points.
Set $\beta_t = \bar{u}_t-\hat{u}_t$.
Consider the control $\hat{u}+\frac{1}{2}\beta \in \Af$, and the mapping 
\begin{equation}
h(y) \to J\big(\hat{u}+\frac{1}{2}\beta+y\frac{1}{2}\beta\big), \quad y\in (-\epsilon,\epsilon)
\label{eq:concave_mapping_maximum}
\end{equation}
Note that $\epsilon>1$, $h(1) = J(\bar{u})$ and $h(-1) = J(\hat{u})$. Since $h$ is strictly concave at most one of $h(-1)$ and $h(1)$ can be a maximum.

%
\end{proof}
%

\begin{korollar}
Suppose the Assumptions in Proposition \ref{proposisjon:maximum_sufficiently_optimal} hold. If $\UU$ is bounded and a critical point $\hat{u}$ exists, then $\hat{u}$ is optimal, \ie
\begin{equation*}
J(\hat{u}) =  \sup_{u\in \Af} J(u),
\end{equation*}
and optimal portfolio $\hat{u}$ is characterized by
\begin{align*}
\E \big[ \frac{\partial \Ham_t}{\partial u}(\hat{u},X_t^{(\hat{u})}) \big| \Ff_t \big] =&\; \sum_{z=1}^n\ind_{\{\tau^{(z)} >t\}}(t) \rho_t^{(z)} \E\big[ U'(X_T) \ins \big| \Ff_t \big]  \\
&+ \sum_{z=1}^n \big( \Dd_{t,z} U'(X_T) \big) \ind_{\{\tau^{(z)}>t\}}(t)  \lambda_t^{(z)} = 0,
\end{align*}
for all $t\in [0,T]$ a.s.
\label{korollar:supremum_exists}
\end{korollar}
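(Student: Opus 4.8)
The plan is to obtain optimality from the strict concavity of Proposition~\ref{proposisjon:maximum_sufficiently_optimal} together with the first-order condition of Theorem~\ref{teorem:maximum_main_critical_point}, using a midpoint construction to symmetrise the perturbation. First I would observe that, since $\UU$ is bounded, every $u\in\Af$ is a bounded control; hence the given critical point $\hat u$ is a \emph{bounded} critical point, and Proposition~\ref{proposisjon:maximum_sufficiently_optimal} guarantees it is the only one. I then fix an arbitrary competitor $u\in\Af$, the objective being to show $J(\hat u)\ge J(u)$.

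Next I would set the midpoint $w=\tfrac12(\hat u+u)$ and the direction $\beta=\tfrac12(u-\hat u)$, both bounded and $\FF$-predictable. Because $\UU$ is convex, $w+\beta=u\in\UU$ and $w-\beta=\hat u\in\UU$, so $w$ and $\beta$ satisfy \eqref{eq:plusminusinU}. Proposition~\ref{proposisjon:maximum_sufficiently_optimal} then makes $h(y):=J(w+y\beta)$ strictly concave on an interval $(-\epsilon,\epsilon)$ with $\epsilon>1$ (the bound $\epsilon>1$ being exactly the one established there for bounded base controls), and one has $h(-1)=J(\hat u)$, $h(1)=J(u)$, with $\pm 1$ interior points.

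The decisive step is to show $h'(-1)=0$. Running the directional-derivative computation from the proof of Theorem~\ref{teorem:maximum_main_critical_point} at the base point $w-\beta=\hat u$ reduces $\tfrac{d}{ds}J(\hat u+s\beta)\big|_{s=0}$ to $\E\big[\int_0^T\beta_s\cdot\tfrac{\partial\Ham_s}{\partial u}(\hat u_s,\hat X_s)\,ds\big]$. Since $\hat u$ is a critical point, \eqref{eq:maximum_critical_point_condition} gives $\E\big[\tfrac{\partial\Ham_s}{\partial u}(\hat u_s,\hat X_s)\mid\Ff_s\big]=0$, and because $\beta$ is $\FF$-predictable the tower property yields $h'(-1)=0$. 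Strict concavity then forces $y=-1$ to be the global maximiser of $h$, so $J(\hat u)=h(-1)\ge h(1)=J(u)$; as $u$ was arbitrary, $\hat u$ is optimal. I expect the main obstacle to be precisely the identification $h'(-1)=\E\big[\int_0^T\beta_s\cdot\tfrac{\partial\Ham_s}{\partial u}\,ds\big]$: the first-order formula of Theorem~\ref{teorem:maximum_main_critical_point} was derived under the two-sided constraint $\hat u\pm\beta\in\UU$, which may fail here since $\hat u-\beta$ need not lie in $\UU$. The midpoint construction is what repairs this, as differentiability of $h$ at the interior point $y=-1$ follows from the symmetric inclusion $w\pm\beta\in\UU$, while identifying the derivative with the Hamiltonian integral needs only the first-variation process $Y^{(\hat u,\beta)}$ solving the linear equation \eqref{eq:Y_sde} (which exists regardless of sign constraints) and the integrability of Assumption~\ref{antagelse:integrerbarhet}, both available for bounded controls via \eqref{eq:easy_admissible}.

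Finally, the explicit characterisation is obtained by inserting the Hamiltonian derivative $\tfrac{\partial\Ham_t}{\partial u}$ computed just above the statement into \eqref{eq:maximum_critical_point_condition} and pulling the $\Ff_t$-measurable factors $\ind_{\{\tau^{(z)}>t\}}$, $\rho_t^{(z)}$, $\lambda_t^{(z)}$ and $\Dd_{t,z}U'(X_T)$ (the last being $\FF$-predictable since $\GG=\FF$) out of $\E[\,\cdot\mid\Ff_t]$, which leaves the single non-adapted factor as $\E[U'(X_T)\mid\Ff_t]$ and reproduces the displayed formula.
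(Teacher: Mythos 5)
Your proof is correct and follows exactly the route the paper intends: the paper's own proof of Corollary \ref{korollar:supremum_exists} is the single line ``this is a restatement of Proposition \ref{proposisjon:maximum_sufficiently_optimal}'', and your midpoint construction $w=\tfrac12(\hat u+u)$, $\beta=\tfrac12(u-\hat u)$, with strict concavity of $h(y)=J(w+y\beta)$ on an interval of radius $\epsilon>1$ and $h(-1)=J(\hat u)$, $h(1)=J(u)$, is precisely the device used inside the proposition's own uniqueness argument, here completed by the observation $h'(-1)=0$. Your explicit handling of the asymmetry issue (that $\hat u-\beta$ need not lie in $\UU$, so one must reach $h'(-1)=0$ through the Hamiltonian characterization \eqref{eq:maximum_critical_point_condition} of Theorem \ref{teorem:maximum_main_critical_point} and the tower property, rather than through the raw definition of a critical point) supplies a detail the paper leaves entirely implicit, but the underlying approach is the same.
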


\begin{proof}
This is a restatement of Proposition \ref{proposisjon:maximum_sufficiently_optimal}.
\end{proof}

\section{Acknowlegdements}
I would like to thank Giulia Di Nunno and Bernt {\O}ksendal for valuable comments and discussions during the development of this paper.

The research leading to these results has received funding from the European Research Council under the European Community's Seventh Framework Programme (FP7/2007-2013) / ERC grant agreement no [228087].
\bibliographystyle{alpha}
\phantomsection
\addcontentsline{toc}{section}{References}
\bibliography{referanser} 
 
\end{document}